\documentclass[10pt]{article}
\usepackage{amsmath, amsthm, amssymb, amsfonts, mathrsfs, mathtools, upgreek}
\usepackage{graphicx}
\usepackage{makeidx}
\usepackage{tikz-network}
\usepackage[left=2cm,right=2cm,top=2cm,bottom=2cm]{geometry}
\usepackage{caption}
\usepackage{subcaption}
\usepackage[section]{placeins}
\usepackage{enumitem}
\usepackage{tikz}
\usepackage{stmaryrd}
\usepackage{upquote}
\usepackage{authblk}

\usepackage{multicol}

\usepackage{hyperref}
\hypersetup{colorlinks=true, citecolor={blue}, linkcolor=blue, filecolor=magenta, urlcolor=cyan}
\usetikzlibrary{arrows.meta, positioning, decorations.pathreplacing}
\usepackage{cleveref}
\newcommand{\compconj}[1]{\overline{#1}}

\newtheorem{theorem}{Theorem}[section]

\newtheorem{lemma}[theorem]{Lemma}

\newtheorem{example}[theorem]{Example}
\newtheorem{corollary}[theorem]{Corollary}
\theoremstyle{definition}
\newtheorem{defi}[theorem]{Definition}

\numberwithin{equation}{section}
\newcommand{\ip}[2]{\left\langle #1, #2 \right\rangle}
\newcommand{\divides}{\mid}

\newcommand{\homo}{\operatorname{Hom}}
\newcommand{\gl}{\operatorname{GL}}
\newcommand{\aut}{\operatorname{Aut}}

\newcommand{\cay}{\operatorname{Cay}}

\newcommand{\iu}{\mathbf{i}}

\newcommand{\lcm}{\operatorname{lcm}}

\newcommand{\Zl}{\mathbb{Z}}
\newcommand{\Rl}{\mathbb{R}}

\newcommand{\Cl}{\mathbb{C}}

\newcommand{\eu}{\mathbf{e}_u}
\newcommand{\ev}{\mathbf{e}_v}

\newcommand{\evt}{\mathbf{e}_v^t}
\newcommand{\ld}{\lambda}

\title{Perfect state transfer in Grover walks on dihedral Cayley graphs}

\author[1]{Koushik Bhakta}
\author[1]{Bikash Bhattacharjya}
\author[2]{Xiwang Cao}

\affil[1]{\small{ Department of Mathematics, Indian Institute of Technology Guwahati, India}}
\affil[2]{\small{ School of Mathematical Sciences, Nanjing University of Aeronautics and Astronautics, China}}

\date{}
\begin{document}
	\maketitle
	\begingroup
	\renewcommand{\thefootnote}{}
	\footnote{E-mail addresses: 
		\href{mailto:b.koushik@iitg.ac.in}{b.koushik@iitg.ac.in} (Bhakta),
		\href{mailto:b.bikash@iitg.ac.in}{b.bikash@iitg.ac.in} (Bhattacharjya),
		\href{mailto:xwcao@nuaa.edu.cn}{xwcao@nuaa.edu.cn} (Cao).}
	\addtocounter{footnote}{-1}
	\endgroup
	\vspace{-0.3in}
	
	\begin{center}{\textbf{Abstract}}\end{center}
	\noindent 
	The paper investigates perfect state transfer (PST) in Grover walks on Cayley graphs over the dihedral group $D_n$. The Grover walk is a discrete-time quantum walk widely studied in quantum information processing. A Cayley graph $\cay(\Gamma,S)$ is called normal if $S$ is the union of some conjugacy	classes of the group $\Gamma$; otherwise, it is called non-normal. Most existing studies have been restricted to Cayley graphs over abelian groups. In contrast, we investigate both normal and non-normal cases for Cayley graphs over the non-abelian group $D_n$. By examining the parity of $n$ and the normality of the Cayley graph, we obtain a complete characterization of PST on $\cay(D_n,S)$. In particular, we establish necessary and sufficient conditions for the occurrence of PST in all possible cases, and prove that PST does not occur for normal Cayley graphs when $n$ is odd. Furthermore, we construct several infinite families of normal and non-normal Cayley graphs $\cay(D_n,S)$ that exhibit PST, illustrating the application of the main result. Our approach is based on the representation theory of the dihedral group.
	
	\vspace*{0.3cm}
	\noindent 
	\textbf{Keywords.} Cayley graph, discrete-time quantum walk, Grover walk, dihedral group, perfect state transfer \\
	\textbf{Mathematics Subject Classifications:} 05C25, 05C50, 81Q99
	
	\section{Introduction}
	Quantum walks are fundamental models in quantum information processing, provide powerful frameworks for developing quantum algorithms~\cite{quantum_algorithm}, quantum cryptography~\cite{cryptography}, quantum simulations~\cite{first_grover}, and more. Quantum walks are broadly classified into two types based on their time evolution: continuous-time~\cite{state} and discrete-time~\cite{godsil_dqw}. Several frameworks for discrete-time quantum walks have been introduced and extensively studied in the literature~\cite{banerjee1,banerjee2,bethetree,sarkar,zhan3}. In this work, we focus on a particular and widely studied model of discrete-time quantum walks, known as the Grover walk~\cite{hig3}.
	
	A discrete-time quantum walk is governed by a unitary matrix $U$ acting on $\mathbb{C}^n$.  The matrix $U$ is called the \emph{transition matrix} of the walk.  The state of the quantum system is represented by a unit vector in $\mathbb{C}^n$. If the initial state is $z$, then after $k$ steps the state becomes $U^{k}z$.  If there exists a time $\tau$ such that $U^\tau \Phi_1$ equals $\Phi_2$ up to a global phase factor, we say perfect state transfer (PST) occurs from $\Phi_1$ to $\Phi_2$. Quantum state transfer in spin chains was first proposed by Bose~\cite{bose_pst}, and the concept of PST was later introduced by Christandl et al.~\cite{christandl}.	 
	
	The concepts of PST in Grover walks on graphs have been extensively studied over the past decade. A brief summary of previous studies on PST in Grover walks is presented in Table~\ref{previous_pst}. Additional related studies on discrete-time quantum walks can be found in~\cite{regular,singh,yoshie2,uniform_dqw}. Most previous studies have focused on Cayley graphs over abelian groups. In a recent work, Sarkar and Adhikari~\cite{sarkar1} examined the periodicity and localization properties of discrete-time quantum walks on Cayley graphs over dihedral groups using generalized Grover coins. In this work, we investigate PST in Grover walks on Cayley graphs over dihedral groups, considering both normal and non-normal cases.  We give  complete characterization of PST for Cayley graphs $\mathrm{Cay}(D_n,S)$ in terms of their eigenvalues. 
	
	\begin{table}[h!]
		\centering
		\begin{tabular}{|c|c|}
			\hline
			Graphs & Ref. \\
			\hline
			$\cay(\Zl_{2n}, \{\pm a, \pm b\})$ with $a+b=n$ & \cite{zhan1}  \\
			\hline
			$\cay(\Zl_{n}, \{\pm a, \pm b\})$ & \cite{kubota2}  \\
			\hline
			$K_{m,\ldots,m}$ & \cite{kubota1} \\ 
			\hline
			Unitary Cayley graphs &\cite{bhakta1}\\
			\hline
			Quadratic unitary Cayley graphs & \cite{bhakta2}\\
			\hline
			Unitary and quadratic unitary Cayley graphs over ring & \cite{bhakta3}\\
			\hline
			Distance-regular graphs and association schemes & \cite{bhakta4}\\
			\hline
		\end{tabular}
		\caption{Previous studies on PST in Grover walks}
		\label{previous_pst}
	\end{table}
	
	The paper is organized as follows. In the next section, we introduce the Grover walk with some related results and provide the definition of PST between two vertices. In Section~3, we carry out a detailed spectral analysis of Cayley graphs over the dihedral group, where we derive explicit expressions for the eigenvalues and eigenvectors without assuming that the connection set is conjugacy closed (Theorems~\ref{ev1_dn} and \ref{ev2_dn}). In Section~4, we provide characterization for the occurrence of PST on $\cay(D_n,S)$  (Theorems~\ref{pst_dn_1}, \ref{pst_dn_2}, \ref{pst_dn_3} and \ref{pst_dn_4}). In Section~5, we illustrate our results with some concrete examples (Examples~\ref{eg-pst1}, \ref{eg-pst11}, \ref{eg-pst2} and \ref{eg-pst3}). Finally, in Section~6, we conclude with a summary of our main findings and a brief discussion of possible directions for future.
	
	\section{Grover walk}
	Let $G$ be a finite simple graph with vertex set $V(G)$ and edge set $E(G)$. In a simple graph, each edge is an unordered pair of adjacent vertices. Thus $E(G)=\big\{\{u,v\}:u,v\in V(G), u\neq v, u\sim v\big\}$. We denote the set of symmetric arcs by $\mathcal{A}(G):=\big\{(u,v),(v,u):\{u,v\}\in E(G)\big\}$, where each edge corresponds to two directed arcs with opposite orientations. For an arc $a\in\mathcal{A}(G)$ given by the ordered pair $(u,v)$, the \emph{origin} $o(a)$ and the \emph{terminus} $t(a)$ are defined by $o(a)=u$ and $t(a)=v$, respectively.

	We introduce few matrices essential to the definition of the Grover walk. For any matrix $M$, let $M_{x,y}$ denote its entry at row $x$ and column $y$. The \emph{shift matrix} $R:=R(G)\in\Cl^{\mathcal{A}(G)\times\mathcal{A}(G)}$ of $G$ is defined by $R_{a,b}=\delta_{a}(b)$, where $\delta$ is the Kronecker delta function. The \emph{boundary matrix} $N:=N(G)\in\Cl^{V(G)\times\mathcal{A}(G)}$ of $G$ is defined by $N_{u,a}=\frac{1}{\sqrt{\deg u}}\delta_{u}(t(a))$, where $\deg u$ denotes the degree of the vertex $u$ in $G$. The \emph{transition matrix} $U:=U(G)\in\Cl^{\mathcal{A}(G)\times\mathcal{A}(G)}$ of $G$ is defined by $U=R(2N^*N-I)$, where $I$ is the identity matrix and $N^*$ is the conjugate transpose of $N$. 
	
	We refer to \cite{kubota2} for further details about these matrices and for a graphical interpretation of the action of the transition matrix. The discrete-time quantum walk defined by the unitary matrix $U$ is called the \emph{Grover walk} (see \cite{hig_grover}). The Grover walk can also be viewed as a special case of a bipartite walk \cite{chen}.
	
	The spectrum of $U$ is determined by a smaller matrix known as the \emph{discriminant matrix}, denoted $P:=P(G)\in\Cl^{V(G)\times V(G)}$, defined as $P=NRN^*$.  The \emph{adjacency matrix} $A:=A(G)\in\Cl^{V(G)\times V(G)}$ of a graph $G$ is defined by $A_{u,v}=1$ if $\{u,v\}\in E(G)$ and $0$ otherwise. In the case of a regular graph, the discriminant matrix and the adjacency matrix are directly correlated.
	\begin{lemma}[{\cite[Lemma~2.1]{kubota2}}]\label{reg}
		If $G$ is a $k$-regular graph, then $P=\frac{1}{k}A$. 
	\end{lemma}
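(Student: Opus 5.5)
The plan is to compute the entries of $P=NRN^*$ directly from the definitions of $N$ and $R$. There is no need to pass through the spectrum of $U$: once regularity fixes $\deg u=k$ for every vertex, the identity should follow from a single entrywise count.

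First I would write out the product entrywise:
\[
P_{u,v}=\sum_{a,b\in\mathcal{A}(G)} N_{u,a}R_{a,b}\overline{N_{v,b}}.
\]
The shift matrix has $R_{a,b}=1$ exactly when $b$ is the reversal of $a$. I read $\delta_a(b)$ in this way, as is standard for the Grover walk: $R$ maps the arc $(x,y)$ to $(y,x)$. So only the terms with $b=a^{-1}$ survive, and since $N$ is real,
\[
P_{u,v}=\sum_{a\in\mathcal{A}(G)} N_{u,a}N_{v,a^{-1}}.
\]
Next I substitute the definition of $N$. We have $N_{u,a}\neq 0$ iff $t(a)=u$, and $N_{v,a^{-1}}\neq 0$ iff $t(a^{-1})=o(a)=v$. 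So the only arc that can contribute is $a=(v,u)$, and it is an arc exactly when $\{u,v\}\in E(G)$. This gives
\[
P_{u,v}=\frac{1}{\sqrt{\deg u}\sqrt{\deg v}}A_{u,v}.
\]
The diagonal needs no separate treatment, because $G$ is simple, so there are no loops and $A_{u,u}=0=P_{u,u}$.

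Finally I impose $k$-regularity, so that $\deg u=\deg v=k$. The factor becomes $1/k$, and hence $P=\frac{1}{k}A$.

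The main obstacle is only bookkeeping. The paper's notation $R_{a,b}=\delta_a(b)$ must be interpreted as $\delta_{a}(b^{-1})$, where $b^{-1}$ denotes the reversed arc. Otherwise $R=I$, and $P=NN^*$ would be diagonal rather than $\frac1k A$. I would state this convention explicitly, consistent with \cite{kubota2}, before running the computation above.
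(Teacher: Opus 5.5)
Your proof is correct. The paper gives no argument of its own and only cites \cite[Lemma~2.1]{kubota2}; your entrywise computation $P_{u,v}=A_{u,v}/\sqrt{\deg u\,\deg v}$, specialized to $\deg u=\deg v=k$, is the standard argument behind that fact. You are also right that the paper's $R_{a,b}=\delta_a(b)$ must be read as $\delta_a(b^{-1})$, with $b^{-1}$ the reversed arc: taken literally, the definition gives $R=I$ and hence $P=NN^*=I$, so the lemma would fail.
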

	For any nonnegative integer $k$, we write $\{a\}^k$ for the multiset $\{a,\ldots,a\}$ in which the element $a$ appears exactly $k$ times. Throughout the paper, the imaginary unit $\sqrt{-1}$ is denoted by $\iu$. 
	\begin{theorem} [{\cite[Proposition~1]{hig3}}] \label{ev_grover}
		Let $\mu_1, \ldots,\mu_n$ be the eigenvalues of the discriminant of a graph $G$. Then the spectrum of the transition matrix $U$ is given by
		\[\left\{e^{\pm \iu \arccos(\mu_j)}:1\leq j\leq n\right\}\cup \{1\}^{b_1} \cup \{ -1\}^{b_1-1+1_B},\] 
		where $b_1=|E(G)|-|V(G)|+1$, and $1_B=1$ or $0$ according as $G$ is bipartite or not.	
	\end{theorem}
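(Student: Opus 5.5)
We will prove the spectral statement of Theorem~\ref{ev_grover} by decomposing $\Cl^{\mathcal{A}(G)}$ into the space $\mathcal{L}=\operatorname{span}\{N^*x,\ RN^*x : x\in\Cl^{V(G)}\}$ and its orthogonal complement $\mathcal{L}^\perp$, both of which are $U$-invariant. First we record the basic identities: $NN^*=I$ (each vertex has $\deg u$ incoming arcs, each weighted $1/\sqrt{\deg u}$), $R^2=I$, $R=R^*$, and $P=NRN^*$ is real symmetric with spectrum in $[-1,1]$ since $\|N\|=\|R\|=1$. Here $R$ is the arc-reversal involution $(u,v)\mapsto(v,u)$, which is how the shift matrix acts on arcs.

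\emph{The inherited part.} For an eigenvector $x$ of $P$ with $Px=\mu x$, we compute $U N^*x = R(2N^*NN^*-N^*)x = RN^*x$, and
\[
U RN^*x = R(2N^*N-I)RN^*x = 2RN^*Px - N^*x = 2\mu RN^*x - N^*x .
\]
So $W_x=\operatorname{span}\{N^*x, RN^*x\}$ is $U$-invariant. If $|\mu|<1$, the vectors $N^*x$ and $RN^*x$ are independent, because $\ip{N^*x}{RN^*x}=\mu\|x\|^2$ and Cauchy--Schwarz is strict. On $W_x$, $U$ acts by the matrix $\begin{pmatrix}0&-1\\1&2\mu\end{pmatrix}$, whose characteristic polynomial $\ld^2-2\mu\ld+1$ has roots $e^{\pm\iu\arccos\mu}$. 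When $\mu=\pm1$, the space $W_x$ is one-dimensional, with $RN^*x=\pm N^*x$, and contributes the single eigenvalue $\pm1$. These cases are precisely where the multiplicity bookkeeping changes: $\mu=1$ occurs once per connected component, and $\mu=-1$ occurs exactly when $G$ is bipartite. Assuming $G$ connected, this is the count $1_B$.

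\emph{The complement.} For $z\in\mathcal{L}^\perp$ we have $Nz=0$ and $NRz=0$, so $Uz=-Rz$. Hence $\mathcal{L}^\perp$ is $R$-invariant, and $U$ restricted to it has eigenvalues $\pm1$ given by the $\mp1$ eigenspaces of $R$ there. The $(-1)$-eigenspace of $R$ consists of antisymmetric arc functions, i.e.\ flows, and the condition $Nz=0$ makes them circulations. The circulation space has dimension $|E|-|V|+1=b_1$, which gives $\{1\}^{b_1}$. The $(+1)$-eigenspace of $R$ consists of symmetric edge functions whose vertex sums vanish, i.e.\ the kernel of the unsigned incidence map $\Cl^{E}\to\Cl^{V}$. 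That map has rank $|V|-1_B$, so this space has dimension $|E|-|V|+1_B$. It yields eigenvalue $-1$ for $U$ with multiplicity $b_1-1+1_B$.

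\emph{Counting and the main obstacle.} Finally we check that the dimensions add up: $2|V|-(\text{number of }\mu=\pm1)+b_1+(b_1-1+1_B)=2|E|$. Equivalently, $\dim\mathcal{L}=2|V|-1-1_B$. The main obstacle is to state the multiplicities exactly as in Theorem~\ref{ev_grover}: the listed multiset assigns both $e^{\pm\iu\arccos\mu_j}$ to every $j$, including $\mu_j=\pm1$. We would reconcile this by noting that for $\mu=1$ the pair $e^{\pm\iu\arccos 1}$ counts $1$ twice, one copy of which is exactly compensated in the $b_1$ count. Likewise, in the bipartite case $\mu=-1$ yields $-1$ twice, which is matched by the $-1+1_B$ correction. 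We will carry out this matching carefully, since it is the only subtle point. The rank of the unsigned incidence matrix, $|V|-1_B$ for connected $G$, is a standard fact that we will cite.
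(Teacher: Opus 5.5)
Your route is the standard proof from the source the paper cites; the paper itself gives no proof. You split $\Cl^{\mathcal{A}(G)}$ into $\mathcal{L}=N^*\Cl^{V}+RN^*\Cl^{V}$ and $\mathcal{L}^\perp$. On $\operatorname{span}\{N^*x,RN^*x\}$, $U$ acts by $\begin{pmatrix}0&-1\\1&2\mu\end{pmatrix}$. On $\mathcal{L}^\perp$ you identify $U=-R$ via circulations and the kernel of the unsigned incidence matrix. All of these computations are correct.

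\textbf{The gap is in the step you single out as the main obstacle: the proposed ``compensation'' does not exist.} Your own analysis gives the exact contributions:
\begin{itemize}
\item $\mathcal{L}$ contributes one copy of $1$ (from $\mu=1$) and $1_B$ copies of $-1$;
\item $\mathcal{L}^\perp$ contributes $b_1$ copies of $1$ and $b_1-1+1_B$ copies of $-1$.
\end{itemize}
So the true multiplicities are $b_1+1$ for $1$ and $b_1-1+2\cdot 1_B$ for $-1$. There is no slack in $b_1$ or in $b_1-1+1_B$ to absorb an extra copy.

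Suppose the first multiset really listed both values $e^{\pm\iu\arccos\mu_j}$ when $\mu_j=\pm1$. Then the right-hand side would have $2|V|+2b_1-1+1_B=2|E|+1+1_B$ elements, which exceeds $|\mathcal{A}(G)|=2|E|$. Already for $G=K_2$ it would read $\{1,1,-1,-1\}$, whereas $U=R$ is $2\times 2$. Under that reading the statement is false, so any ``careful matching'' would have to contain an arithmetic slip.

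The fix is interpretive, not computational. For $\mu_j=\pm1$ the two values $e^{\pm\iu\arccos\mu_j}$ coincide and must be counted once. Thus each $\mu_j\in(-1,1)$ contributes the pair $e^{\pm\iu\arccos\mu_j}$, and each $\mu_j=\pm1$ contributes the single eigenvalue $\pm1$. This is exactly what you already showed when you noted that $W_x$ is one-dimensional for $\mu=\pm1$. With this reading, your count $\dim\mathcal{L}=2|V|-1-1_B$ matches the statement exactly.

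Two smaller points.
\begin{itemize}
\item To get $\dim\mathcal{L}=2|V|-1-1_B$, state that the spaces $W_{x_j}$, for an orthonormal eigenbasis $x_1,\dots,x_{|V|}$ of $P$, are mutually orthogonal. This follows from $\ip{N^*x_i}{N^*x_j}=\ip{RN^*x_i}{RN^*x_j}=\ip{x_i}{x_j}$ and $\ip{N^*x_i}{RN^*x_j}=\ip{x_i}{Px_j}=\mu_j\ip{x_i}{x_j}$.
\item Connectedness is a necessary hypothesis, not a convenience. With $b_1=|E|-|V|+1$ the formula fails for disconnected graphs; for two disjoint copies of $K_2$ it gives $b_1=-1$. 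You should state it explicitly.
\end{itemize}
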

	The Grover walk on a graph is called \emph{periodic} if there exists a positive integer $\tau$ such that  $U^\tau=I$. For simplicity, we say that the graph itself is periodic in this case. If $\tau$ is the smallest positive integer satisfying $U^\tau =I$, then the \emph{period} of the graph is $\tau$, and the graph is called \emph{$\tau$-periodic}. Since $U$ is diagonalizable, the period  of a graph can be easily found by the eigenvalues of $U$.
	\begin{lemma}[{\cite[Corollary~2.2.1]{bhakta1}}]\label{periodic}
		Let $\eta_1, \ldots, \eta_n$ be the  eigenvalues of the transition matrix of a graph $G$. Let $k_1, \ldots, k_n$ be the least positive integers such that $\eta_1 ^{k_1}=1, \ldots,\eta_n^{k_n}=1$. Then $G$ is periodic and the period of $G$ is $\lcm(k_1, \ldots, k_n)$.
	\end{lemma}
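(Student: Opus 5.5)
The plan is to diagonalize $U$ and reduce $U^\tau=I$ to a statement about its eigenvalues. The setting is a finite graph $G$, so $U$ is a finite square matrix indexed by $\mathcal{A}(G)$. Write $U=R(2N^*N-I)$. Since $RR=I$ and $R$ is a real symmetric permutation matrix, $R$ is unitary. The matrix $2N^*N-I$ is also unitary, because $NN^*=I$ on $\Cl^{V(G)}$. This identity holds since $(NN^*)_{u,v}=\sum_{a}N_{u,a}N_{v,a}=\delta_u(v)\cdot\frac{1}{\deg u}\cdot\deg u$; we need every vertex to have positive degree, as is implicit in the definition of $N$. Hence $N^*N$ is an orthogonal projection, and $2N^*N-I$ is a reflection, which is unitary and Hermitian. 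Therefore $U$ is unitary, and in particular normal. By the spectral theorem there is a unitary $W$ with $U=W\,\mathrm{diag}(\eta_1,\ldots,\eta_m)W^*$, where $m=|\mathcal{A}(G)|$ and the $\eta_j$ are the eigenvalues listed with multiplicity (the statement's indexing $\eta_1,\ldots,\eta_n$ just lists them). Then for every positive integer $\tau$ we have $U^\tau=W\,\mathrm{diag}(\eta_1^\tau,\ldots,\eta_m^\tau)W^*$. It follows that $U^\tau=I$ if and only if $\eta_j^\tau=1$ for all $j$.

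Next, the eigenvalues must be roots of unity; otherwise the integers $k_j$ need not exist. The statement presupposes that the $k_j$ exist, that is, each $\eta_j$ has finite order. Under that hypothesis the conclusion is elementary. We have $\eta_j^\tau=1$ if and only if $k_j\mid\tau$, since $k_j$ is the order of $\eta_j$ in the group $\Cl^\times$. Thus $U^\tau=I$ if and only if $\tau$ is a common multiple of $k_1,\ldots,k_n$. Taking $\tau=\lcm(k_1,\ldots,k_n)$ gives $U^\tau=I$, so $G$ is periodic. Any $\tau$ with $U^\tau=I$ is a multiple of the lcm. Hence the least such $\tau$, which is the period, equals $\lcm(k_1,\ldots,k_n)$.

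The main point needing care is the diagonalizability of $U$, which rests on the unitarity check above. Without it, a nontrivial Jordan block with eigenvalue $1$ would make $U^\tau\neq I$ even though all $\eta_j^\tau=1$. Everything else is bookkeeping with orders of elements in a cyclic group. If one wanted to drop the hypothesis on the $k_j$, one would simply observe that periodicity forces every $\eta_j^\tau=1$, so the $k_j$ exist exactly when $G$ is periodic. By Theorem~\ref{ev_grover} this amounts to each $\arccos(\mu_j)$ being a rational multiple of $\pi$.
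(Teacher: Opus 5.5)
Your proof is correct and follows the same route the paper relies on. The paper cites the lemma from \cite{bhakta1} and justifies it only with the remark that $U$ is diagonalizable, so $U^\tau=I$ exactly when $\eta_j^\tau=1$ for every $j$, that is, when $\tau$ is a common multiple of the $k_j$; your unitarity check ($R^2=I$ and $NN^*=I$, so $2N^*N-I$ is a reflection) supplies the diagonalizability the paper takes for granted.
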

	A vector $\Phi\in\Cl^{\mathcal{A}(G)}$ is called a \emph{state} if it has unit Euclidean norm. We say \emph{perfect state transfer} (in short, PST) occurs from a state $\Phi_1$ to another state $\Phi_2$ at time $\tau$ if there exists a complex number $\gamma$ with $|\gamma|=1$ such that $U^\tau\Phi_1=\gamma\Phi_2$. We are mainly interested in transfer between states that are localized at individual vertices of the graph. For a vertex $u$, define $\Phi_u=N^*\eu$, where $(\eu)_x=\delta_{u,x}$. The state $N^*\eu$ is called a \emph{vertex-type} state. For a motivation behind this definition and a geometric interpretation of vertex-type states, we refer the reader to \cite{kubota1}.
	\begin{defi}
		A graph is said to exhibit perfect state transfer (PST) from vertex $u$ to another vertex $v$ at time $\tau$ if there exists a complex number $\gamma$ with $|\gamma|=1$ such that $U^\tau \Phi_u=\gamma\Phi_v$.
	\end{defi}
	Perfect state transfer in Grover walks can be analyzed effectively using Chebyshev polynomials. The \emph{Chebyshev polynomial of the first kind}, denoted $T_n(x)$, defined by the recurrence relation: $T_0(x)=1$, $T_1(x)=x$, and $T_n(x)=2xT_{n-1}(x)-T_{n-2}(x)$ for $n\geq 2$. It is well known that $T_n(\cos x)=\cos nx$. Kubota and Yoshino~\cite{kubota2} proved that PST occurs from $u$ to $v$ if and only if 	$T_\tau(P)\mathbf{e}_u = \gamma \mathbf{e}_v$ for some $\gamma \in \{ -1,1\}$. 	Later, Guo and Schmeits~\cite{guo_sch} refined this result by showing that PST occurs if and only if 	$T_\tau(P)\mathbf{e}_u = \mathbf{e}_v$.
	\begin{lemma}[{\cite[Theorem~6.5]{kubota2}}]\label{defpst}
		Let $u$ and $v$ be two vertices of a graph $G$. Then PST occurs from $u$ to $v$ at time $\tau$ if and only if $ T_\tau(P)\eu=\ev$. 
	\end{lemma}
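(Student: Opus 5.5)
The plan is to reduce PST to a statement about the vertex-space operator $T_\tau(P)$, using the intertwining relation between $U$ and the discriminant $P=NRN^*$. First I would record the basic identities: $NN^*=I$ on $\Cl^{V(G)}$ (each row of $N$ has $\deg u$ entries equal to $1/\sqrt{\deg u}$ with disjoint supports), $R^2=I$, and $R^*=R$. Setting $C=2N^*N-I$, the subspace $\mathcal{L}=\operatorname{span}\{N^*\mathbf{x},\,RN^*\mathbf{x}:\mathbf{x}\in\Cl^{V(G)}\}$ is $U$-invariant. Indeed, $UN^*\mathbf{x}=RN^*\mathbf{x}$ because $CN^*=N^*$, and $URN^*\mathbf{x}=R(2N^*NRN^*-RN^*)\mathbf{x}=2RN^*P\mathbf{x}-N^*\mathbf{x}$. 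By induction on $\tau$ I would then prove
\[
U^\tau N^*=N^*T_\tau(P)+(\text{terms in }RN^*-N^*P\text{ applied to vertex vectors}),
\]
more precisely $U^\tau N^* = N^*T_\tau(P) + (RN^*-N^*P)\,S_{\tau-1}(P)$, where $S_k$ is the Chebyshev polynomial of the second kind. The check is a routine induction using the three-term recurrence, or it can be done through the eigen-decomposition of Theorem~\ref{ev_grover}: on the two-dimensional invariant piece attached to an eigenvector of $P$ with eigenvalue $\cos\theta$, $U$ acts as rotation by $e^{\pm\iu\theta}$.

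Next, apply $N$ to both sides. Since $NN^*=I$ and $NRN^*=P$, the correction term is killed, and we get $NU^\tau N^*=T_\tau(P)$, that is, $\langle\Phi_v,U^\tau\Phi_u\rangle=(T_\tau(P))_{v,u}$.

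For the \emph{only if} direction, suppose $U^\tau\Phi_u=\gamma\Phi_v$. Applying $N$ gives $T_\tau(P)\eu=\gamma\ev$. Since $P$ and $T_\tau(P)$ are real matrices, $\gamma$ is real, so $\gamma=\pm1$.

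For the \emph{if} direction, $T_\tau(P)\eu=\pm\ev$ gives $\langle \Phi_v,U^\tau\Phi_u\rangle=\pm1$. Both $\Phi_v$ and $U^\tau\Phi_u$ are unit vectors, so equality in Cauchy--Schwarz forces $U^\tau\Phi_u=\pm\Phi_v$.

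The main obstacle is excluding $\gamma=-1$, which is the refinement of Guo and Schmeits. My first attempt would be a parity and positivity argument. Evaluate the identity at the vertex $u$ using the spectral decomposition $P=\sum_j\mu_jE_j$. Then
\[
T_\tau(P)_{v,u}=\sum_j\cos(\tau\theta_j)(E_j)_{v,u} = -1
\]
together with $T_\tau(P)\eu=-\ev$, combined with the fact that $T_\tau(P)$ has norm at most $1$, forces $\cos(\tau\theta_j)(E_j)\eu=-(E_j)\ev$ for all $j$. In particular this must hold for the Perron eigenvalue $\mu=1$, for which $\theta=0$ and $E_1$ has positive entries $\sqrt{\deg u\deg v}/2|E|$. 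That gives $(E_1)_{v,u}>0$ on one side but $-(E_1)_{v,v}<0$ on the other, which is a contradiction; on a disconnected graph the same argument is applied to the component containing $u$ (if $v$ lies in a different component, PST is already impossible). So $\gamma=1$, which yields exactly $T_\tau(P)\eu=\ev$.
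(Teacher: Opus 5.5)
Your proof is correct. The paper gives no proof of its own: it cites Kubota--Yoshino for the version $T_\tau(P)\mathbf{e}_u=\pm\mathbf{e}_v$ and Guo--Schmeits for removing the sign. Your argument is a sound, self-contained reconstruction of that same two-step structure. The identity $U^\tau N^*=N^*T_\tau(P)+(RN^*-N^*P)S_{\tau-1}(P)$ does hold by induction via $S_\tau=T_\tau+xS_{\tau-1}$ and $T_{\tau+1}=xS_\tau-S_{\tau-1}$, which gives $NU^\tau N^*=T_\tau(P)$. The positivity of the eigenvalue-$1$ eigenvector then rules out $\gamma=-1$.

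Two minor simplifications are available. The appeal to $\|T_\tau(P)\|\le 1$ is unnecessary, because $E_jT_\tau(P)=T_\tau(\mu_j)E_j$ already gives $T_\tau(\mu_j)E_j\mathbf{e}_u=-E_j\mathbf{e}_v$. You can also avoid the component case split by pairing with $\mathbf{d}=(\sqrt{\deg w})_w$. Since $P\mathbf{d}=\mathbf{d}$, you have $\mathbf{d}^{t}T_\tau(P)=\mathbf{d}^{t}$, so $T_\tau(P)\mathbf{e}_u=-\mathbf{e}_v$ would force $\sqrt{\deg u}=-\sqrt{\deg v}$ at once.
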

	In \cite[Lemma~3.2]{kubota1}, Kubota and Segawa showed that $\|T_n(P)\eu\|\leq 1$ for each $u\in V(G)$. Consequently, $T_\tau(P)\eu=\ev$ if and only if $T_\tau(P)_{u,v}=1$. Moreover, since $P$ is symmetric, $T_n(P)$ is also symmetric for every $n$. Hence by Lemma~\ref{defpst}, instead of saying that PST occurs from  $u$ to $v$,  we may say that PST occurs between $u$ and $v$.      
	\section{\boldmath Spectral analysis of Cayley graphs}
	Let $\Gamma:=(\Gamma, ~\cdot)$ be a finite group with the identity element $1$. For simplicity of notation, we write $ab$ instead of $a \cdot b$. Let $S$ be a non-empty subset of $\Gamma$. The Cayley graph $G := \cay(\Gamma, S)$ of $\Gamma$ with respect to the connection set $S$ is defined by
	\begin{align*}
		V(G) &= \Gamma, \\
		E(G) &= \big\{\{u,v\} : uv^{-1} \in S \big\}.
	\end{align*}
	We assume that $1 \notin S$ and $S = S^{-1} := \{s^{-1} : s \in S\}$ to ensure that $G$ is a simple graph.
	
	In order to analyze the spectrum of the Cayley graph $\mathrm{Cay}(\Gamma,S)$, we employ Fourier analysis on the finite group $\Gamma$. In particular, we use  the irreducible unitary representations of $\Gamma$ and the Fourier transform on $\Gamma$. We refer the reader to \cite{rep_symmetry, steinberg} for details.
	
	\subsection{Representations  and Fourier transform}

	Let $\Gamma$ be a finite group. A \emph{representation} of $\Gamma$ is a group homomorphism $\rho : \Gamma \to \mathrm{GL}(W),$
	where $W$ is a non-zero finite-dimensional vector space over the field $\Cl$, and $\mathrm{GL}(W)$ denotes the general linear group on $W$. The dimension of $W$ is called the \emph{degree}, denoted $d_\rho$, of the representation $\rho$. Two representations $\rho_1$ and $\rho_2$ of $\Gamma$ on vector spaces $W_1$ and $W_2$, respectively, are said to be \emph{equivalent}, denoted $\rho_1 \sim \rho_2$, if there exists an isomorphism $T : W_1 \to W_2$ such that $\rho_2(g) = T\,\rho_1(g)\,T^{-1} \quad \text{for all } g \in \Gamma.$
	
	Let $\rho : \Gamma \to \mathrm{GL}(W)$ be a representation. The \emph{character} of $\rho$ is the function $\chi_\rho : \Gamma \to \Cl$ defined by
	\[
	\chi_\rho(g) = \mathrm{tr}(\rho(g)) \qquad \text{for all } g \in \Gamma,
	\]
	where $\mathrm{tr}(\rho(g))$ denotes the trace of the matrix representation of $\rho(g)$ with respect to a basis of $W$. A subspace $X$ of $W$ is called \emph{$\Gamma$-invariant} if $\rho(g) x \in X$ for all $g \in \Gamma$ and $x \in X$. Clearly, both $\{0\}$ and $W$ are $\Gamma$-invariant subspaces, referred to as \emph{trivial} invariant subspaces. If $W$ contains no nontrivial $\Gamma$-invariant subspaces, then $\rho$ is said to be an \emph{irreducible representation} of $\Gamma$, and its character $\chi_\rho$ is called an \emph{irreducible character} of $\Gamma$. 
	
	Let $W$ be an inner product space. A representation $\rho : \Gamma \to \mathrm{GL}(W)$ is called \emph{unitary} if $\rho(g)$ is a unitary operator for every $g \in \Gamma$. It is  well known that every representation of a finite group is equivalent to a unitary representation. Let $\widehat{\Gamma}$ denote a complete set of pairwise inequivalent irreducible unitary representations of $\Gamma$.

	Let $L(\Gamma) = \Cl^\Gamma = \{f : \Gamma \to \Cl\}$. Then $L(\Gamma)$ is an inner product space, where addition and scalar multiplication are defined by
	\[
	(f_1 + f_2)(g) = f_1(g) + f_2(g), \qquad (cf)(g) = c\,f(g)
	\]
	for all $f_1, f_2, f \in L(\Gamma)$, $c \in \Cl$, and $g \in \Gamma$. The inner product on $L(\Gamma)$ is given by
	\[
	\ip{f_1}{f_2}_{L(\Gamma)} = \sum_{g \in \Gamma} f_1(g)\,\overline{f_2(g)}.
	\]
	For each $g \in \Gamma$, define a function $\delta_g \in L(\Gamma)$ by
	\[
	\delta_g(x) =\left\{ 
	\begin{array}{rl}
		1 & \text{if }x=g \\
		0& \text{otherwise.}
	\end{array}
	\right.
	\]
	It is straightforward to verify that the set $\{\delta_g : g \in \Gamma\}$ forms an orthogonal basis for $L(\Gamma)$. Hence $\dim(L(\Gamma)) = |\Gamma|$. For a subset $S \subseteq \Gamma$, define a linear map $f_S : L(\Gamma) \to L(\Gamma)$ by
	\[
	f_S\!\left(\sum_{g \in \Gamma} c_g \delta_g\right)
	= \sum_{s \in S}\sum_{g \in \Gamma} c_g \delta_{sg}.
	\]
	One can show that the matrix of $f_S$ with respect to the basis $\{\delta_g : g \in \Gamma\}$  is equal to the adjacency matrix of the Cayley graph $\cay(\Gamma, S)$. Hence the eigenvalues of $f(S)$ and the eigenvalues of the adjacency matrix of $\cay(\Gamma, S)$ are same. Considering the canonical isomorphism of $L(\Gamma)$ with $\Cl^{|\Gamma|}$, we also find that each eigenvector of $f(S)$ with eigenvalue $\ld$ corresponds to an eigenvector  of the adjacency matrix of $\cay(\Gamma, S)$ with the same eigenvalue $\ld$.  Further, for $f\in L(\Gamma)$ and $g\in\Gamma$, one can find that
	\[f_S(f)(g)=\sum_{s\in S} f(s^{-1}g).\]

	Let $W_{\rho}$ denote the representation space associated with an irreducible unitary representation of $\rho \in \widehat{\Gamma}$, that is, $\rho:\Gamma\to\gl(W_\rho)$ is a group homomorphism. For each $\rho \in \widehat{\Gamma}$, let 
	$
	\{v_{1}^{\rho}, \ldots, v_{d_{\rho}}^{\rho}\}
	$
	be an orthonormal basis of $W_{\rho}$.  For $1 \leq i,j \leq d_{\rho}$, define functions $\varphi_{i,j}^{\rho} : \Gamma \to \Cl$ by
	\begin{equation}\label{vm}
		\varphi_{i,j}^{\rho}(g)= \ip{\rho(g)v_{j}^{\rho}}{v_{i}^{\rho}}_{W_{\rho}} \quad\text{for } g \in \Gamma.
	\end{equation}

	\begin{lemma}[{\cite[Corollary~1.5.8]{rep_symmetry}}]\label{ortho_rep}
		The collection
		$\{\varphi_{i,j}^{\rho} : \rho \in \widehat{\Gamma},\ 1 \leq i,j \leq d_{\rho}\}$
		forms an orthogonal basis of the space $L(\Gamma)$.
	\end{lemma}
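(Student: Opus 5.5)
The plan is to derive the statement from the Schur orthogonality relations for matrix coefficients of irreducible unitary representations, and then finish with a dimension count. There are three steps: orthogonality between inequivalent representations, orthogonality within a single representation (which also computes the norms and shows they are nonzero), and counting.

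\textbf{Step 1 (inequivalent representations).} Let $\rho,\sigma\in\widehat{\Gamma}$ be inequivalent. For an arbitrary linear map $T:W_\sigma\to W_\rho$, form the averaged map
\[
\tilde T=\sum_{g\in\Gamma}\rho(g)\,T\,\sigma(g)^{-1}.
\]
Then $\rho(h)\tilde T=\tilde T\sigma(h)$ for all $h\in\Gamma$, so $\tilde T$ intertwines $\sigma$ and $\rho$. Since $\rho$ and $\sigma$ are irreducible and inequivalent, Schur's lemma gives $\tilde T=0$. Take $T$ to be the rank-one map $x\mapsto \ip{x}{v^\sigma_{l}}v^\rho_{j}$. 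By unitarity, $\sigma(g)^{-1}=\sigma(g)^*$. Pairing $\tilde T v^\sigma_{k}$ with $v^\rho_{i}$ then gives
\[
\sum_{g}\varphi^\rho_{i,j}(g)\,\overline{\varphi^\sigma_{k,l}(g)}=0
\]
for the appropriate choice of indices, which is the required orthogonality across inequivalent representations.

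\textbf{Step 2 (a single representation).} Now take $\rho=\sigma$. By Schur's lemma, $\tilde T$ is a scalar $c\,I$. Taking traces on both sides gives $c\,d_\rho=|\Gamma|\,\mathrm{tr}(T)$. Applying this to the same rank-one maps yields
\[
\ip{\varphi^\rho_{i,j}}{\varphi^\rho_{k,l}}_{L(\Gamma)}=\frac{|\Gamma|}{d_\rho}\,\delta_{ik}\delta_{jl}.
\]
This shows the functions within one representation are mutually orthogonal, and also that each has norm squared $|\Gamma|/d_\rho\neq0$.

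\textbf{Step 3 (spanning).} Nonzero orthogonal vectors are linearly independent, so it remains only to show that the collection spans $L(\Gamma)$. Its size is $\sum_{\rho}d_\rho^2$, so it suffices to prove $\sum_\rho d_\rho^2=|\Gamma|=\dim L(\Gamma)$. I would obtain this from the regular representation $\lambda$ on $L(\Gamma)$, whose character is $|\Gamma|$ at the identity and $0$ elsewhere. By character orthogonality, which follows from Step 2 by taking traces, each $\rho$ occurs in $\lambda$ with multiplicity $d_\rho$. Comparing dimensions gives $|\Gamma|=\sum_\rho d_\rho^2$.

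I expect the main obstacle to be this completeness step. It needs $\lambda$ to decompose as a direct sum of irreducibles, which follows from Maschke's theorem, and it needs $\widehat{\Gamma}$ to contain every irreducible up to equivalence, which holds because $\widehat{\Gamma}$ is by definition a complete set. As an alternative that avoids multiplicities, I would show directly that any $f$ orthogonal to all the $\varphi^\rho_{i,j}$ is zero. Decompose $\lambda$ into irreducibles; then $f$ is orthogonal to the matrix coefficients of $\lambda$. Expressing $f(1)=(\lambda(f^\ast)\delta_1)(1)$-type quantities as combinations of these coefficients would force $f=0$.
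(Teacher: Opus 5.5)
Your proposal is correct and is essentially the standard argument behind the cited result; the paper gives no proof of its own and only cites the source. The argument is Schur orthogonality $\langle\varphi^\rho_{i,j},\varphi^\sigma_{k,l}\rangle=\frac{|\Gamma|}{d_\rho}\delta_{\rho\sigma}\delta_{ik}\delta_{jl}$, obtained by averaging a rank-one operator, followed by the count $\sum_\rho d_\rho^2=|\Gamma|$ from the regular representation, and your main route closes it completely. Your fallback spanning sketch is vaguer than it needs to be: just note that $\delta_x(g)=\langle\lambda(g)\delta_1,\delta_x\rangle$ is a matrix coefficient of $\lambda$, hence a linear combination of the $\varphi^\rho_{i,j}$.
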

	Define
	\[
	\mathcal{H}(\Gamma)=\bigoplus_{\sigma \in \widehat{\Gamma}} \homo(W_{\sigma}, W_{\sigma}),
	\] 
	where $\homo(W_{\sigma}, W_{\sigma})$ denotes the set of all linear maps from $W_{\sigma}$ to $W_{\sigma}$.
	
	For $\rho,\sigma \in \widehat{\Gamma}$ and $1 \le i,j \le d_{\rho}$, define $T_{i,j}^{\rho}(\sigma):W_\sigma\to W_\sigma$ by
	\[T_{i,j}^\rho(\sigma)(w)=\left\{
	\begin{array}{ll}
		\ip{w}{v_{j}^{\rho}}_{W_{\rho}} v_{i}^{\rho}& \text{ if $\sigma=\rho$}\\
		0 &\text{otherwise.}
	\end{array}
	\right.\]
	Let $T_{i,j}^\rho=\oplus_{\sigma \in \widehat{\Gamma}}T_{i,j}^\rho(\sigma)$. Note that $T_{i,j}^\rho\in\mathcal{H}(\Gamma)$ for each $\rho\in\widehat{\Gamma}$.
	\begin{lemma}[{\cite[Lemma~1.5.10]{rep_symmetry}}]
		The set
		$\{T_{i,j}^{\rho} : \rho \in \widehat{\Gamma},\ 1 \le i,j \le d_{\rho}\}$
		forms an orthogonal basis for $\mathcal{H}(\Gamma)$.
	\end{lemma}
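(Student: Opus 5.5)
The plan is to verify orthogonality directly and then count dimensions. The space $\mathcal{H}(\Gamma)$ carries the natural inner product induced from the summands. On each $\homo(W_\sigma,W_\sigma)$ I will use the (possibly weighted) Hilbert--Schmidt inner product $\ip{A}{B}=\mathrm{tr}(AB^*)$, and on the direct sum I will take the sum of these. The argument then has two steps.

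\textbf{Orthogonality.} Take two elements $T^{\rho}_{i,j}$ and $T^{\rho'}_{k,l}$.
\begin{itemize}
\item If $\rho\neq\rho'$, then in every summand $\sigma$ at least one of the components $T^{\rho}_{i,j}(\sigma)$, $T^{\rho'}_{k,l}(\sigma)$ is zero. Hence their inner product is $0$.
\item If $\rho=\rho'$, only the summand $\sigma=\rho$ contributes. There $T^{\rho}_{i,j}(\rho)$ is the rank-one operator $w\mapsto\ip{w}{v_j^\rho}v_i^\rho$, i.e.\ the matrix unit $E_{ij}$ in the orthonormal basis $\{v^\rho_1,\dots,v^\rho_{d_\rho}\}$.
\end{itemize}
For matrix units we have
\[\mathrm{tr}\bigl(E_{ij}E_{kl}^*\bigr)=\mathrm{tr}(E_{ij}E_{lk})=\delta_{ik}\delta_{jl}.\]
This is nonzero only when $(i,j)=(k,l)$. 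So the family is orthogonal and consists of nonzero vectors, and is therefore linearly independent. Any positive weighting of the summands would not affect this.

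\textbf{Spanning.} An element of $\mathcal{H}(\Gamma)$ is a tuple $(A_\sigma)_{\sigma\in\widehat{\Gamma}}$. Each $A_\sigma$ expands in the basis $\{E_{ij}\}$ of $\homo(W_\sigma,W_\sigma)$ as
\[A_\sigma=\sum_{i,j}\ip{A_\sigma v_j^\sigma}{v_i^\sigma}\,T^\sigma_{i,j}(\sigma).\]
Summing these expansions over $\sigma$ writes the tuple as a combination of the $T^\sigma_{i,j}$, so the family spans.

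Alternatively, a dimension count suffices: $\dim\mathcal{H}(\Gamma)=\sum_\rho d_\rho^2$, which equals the number of $T^\rho_{i,j}$. By Lemma~\ref{ortho_rep} this number is also $|\Gamma|$, though that identity is not needed here.

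The only real obstacle is fixing the inner product on $\mathcal{H}(\Gamma)$, since the paper has not stated it. I would state the Hilbert--Schmidt choice explicitly, or the weighted version $\frac{d_\sigma}{|\Gamma|}\mathrm{tr}(AB^*)$ used in \cite{rep_symmetry} so that the Fourier transform is an isometry. Either choice gives the same orthogonality conclusion.
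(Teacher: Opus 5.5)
Your proof is correct and follows essentially the standard argument behind this result. The paper gives no proof of its own and only cites Lemma~1.5.10 of the representation-theory reference, where $\mathcal{H}(\Gamma)$ carries the weighted inner product $\sum_{\sigma}\frac{d_\sigma}{|\Gamma|}\mathrm{tr}\bigl(T(\sigma)S(\sigma)^*\bigr)$. As in your write-up, each $T^\rho_{i,j}$ is a matrix unit supported on the single summand $\rho$, so orthogonality reduces to $\mathrm{tr}(E_{ij}E_{lk})=\delta_{ik}\delta_{jl}$, and spanning follows from the count $\sum_\rho d_\rho^2=\dim\mathcal{H}(\Gamma)$ or from your explicit expansion.
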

	For each  $f \in L(\Gamma)$, the \emph{Fourier transform} of $f$ is defined by
	\[
	\mathcal{F}(f)=\bigoplus_{\rho \in \widehat{\Gamma}} \rho(f),
	\]
	where the operator $\rho(f): W_{\rho} \to W_{\rho}$ is given by
	\[
	\rho(f)(w)=\sum_{g \in \Gamma} f(g)\,\rho(g)(w) \quad\text{for } w \in W_{\rho}.
	\]
	Equivalently,
	\[
	\rho(f)=\sum_{g \in \Gamma} f(g)\,\rho(g),
	\]
	and therefore $\rho(f) \in \mathrm{Hom}(W_{\rho}, W_{\rho})$ for each $\rho \in \widehat{\Gamma}$.
	\begin{lemma}[{\cite[Theorem~1.5.11]{rep_symmetry}}]\label{fourier}
		The Fourier transform $\mathcal{F}$ is an algebra isomorphism from $L(\Gamma)$ to $\mathcal{H}(\Gamma)$. Moreover, $\mathcal{F}\compconj{\varphi_{i,j}^\rho}=\frac{|\Gamma|}{d_\rho}T_{i,j}^\rho$ and $\mathcal{F}^{-1}T_{i,j}^\rho=\frac{d_\rho}{|\Gamma|} \compconj{\varphi_{i,j}^\rho}$ for each $\rho\in\widehat{\Gamma} $ and  $1\leq i,j\leq d_\rho$.
	\end{lemma}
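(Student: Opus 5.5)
The plan is to prove the two parts of Lemma~\ref{fourier} separately: first that $\mathcal{F}$ is an algebra isomorphism, then the explicit formula for $\mathcal{F}\compconj{\varphi_{i,j}^\rho}$. The inverse formula then follows by linearity. Throughout, $L(\Gamma)$ carries convolution, $(f_1*f_2)(g)=\sum_{h\in\Gamma} f_1(gh^{-1})f_2(h)$, which corresponds to $\delta_a*\delta_b=\delta_{ab}$. The space $\mathcal{H}(\Gamma)$ carries componentwise composition.

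For multiplicativity, the identity $\rho(\delta_a*\delta_b)=\rho(\delta_{ab})=\rho(a)\rho(b)=\rho(\delta_a)\rho(\delta_b)$ holds because $\rho$ is a homomorphism. Bilinearity then gives $\mathcal{F}(f_1*f_2)=\mathcal{F}(f_1)\mathcal{F}(f_2)$, and linearity of $\mathcal{F}$ is immediate. For bijectivity, note that
\[
\dim L(\Gamma)=|\Gamma|=\sum_{\rho\in\widehat{\Gamma}} d_\rho^2=\dim\mathcal{H}(\Gamma).
\]
This count follows from Lemma~\ref{ortho_rep}, since the $\varphi^\rho_{i,j}$ form a basis of $L(\Gamma)$. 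It therefore suffices to show that $\mathcal{F}$ is surjective. Surjectivity will come from the explicit formula, since the $T^\rho_{i,j}$ form a basis of $\mathcal{H}(\Gamma)$.

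For the formula, fix $\sigma\in\widehat{\Gamma}$ and basis vectors $v_l^\sigma,v_k^\sigma$, and compute the matrix entry
\[
\ip{\sigma(\compconj{\varphi^\rho_{i,j}})v_l^\sigma}{v_k^\sigma}
=\sum_{g}\compconj{\varphi^\rho_{i,j}(g)}\,\varphi^\sigma_{k,l}(g)
=\ip{\varphi^\sigma_{k,l}}{\varphi^\rho_{i,j}}_{L(\Gamma)}.
\]
By the Schur orthogonality relations, whose orthogonality part is Lemma~\ref{ortho_rep}, this equals $\frac{|\Gamma|}{d_\rho}\delta_{\sigma\rho}\delta_{ki}\delta_{lj}$. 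The matrix entries of $T^\rho_{i,j}(\sigma)$ are $\ip{T^\rho_{i,j}(\sigma)v_l^\sigma}{v_k^\sigma}=\delta_{\sigma\rho}\delta_{lj}\delta_{ki}$. Comparing the two gives $\mathcal{F}\compconj{\varphi^\rho_{i,j}}=\frac{|\Gamma|}{d_\rho}T^\rho_{i,j}$, and applying $\mathcal{F}^{-1}$ yields the second formula.

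The main obstacle is the normalization $\|\varphi^\rho_{i,j}\|^2=|\Gamma|/d_\rho$, because Lemma~\ref{ortho_rep} as quoted only gives orthogonality. I would prove it with the standard Schur averaging argument. For a linear map $X$ on $W_\rho$, set $\tilde X=\sum_g\rho(g)X\rho(g)^{-1}$. This map commutes with $\rho$, so by Schur's lemma $\tilde X=\frac{|\Gamma|\,\mathrm{tr}X}{d_\rho}I$. Taking $X=v_j\langle\cdot,v_j\rangle$ and reading off the $(i,i)$ entry gives $\sum_g|\varphi^\rho_{i,j}(g)|^2=|\Gamma|/d_\rho$, using the unitarity identity $\rho(g)^{-1}=\rho(g)^*$.
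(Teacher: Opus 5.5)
Your proof is correct. The paper gives no argument of its own for this lemma, since it is quoted from Theorem~1.5.11 of the cited reference, and your route is the standard textbook proof: you check multiplicativity on $\delta_a*\delta_b=\delta_{ab}$, compute $\langle \sigma(\overline{\varphi^\rho_{i,j}})v^\sigma_l, v^\sigma_k\rangle=\langle \varphi^\sigma_{k,l},\varphi^\rho_{i,j}\rangle_{L(\Gamma)}$ entrywise via Schur orthogonality, and get bijectivity from the basis $\{T^\rho_{i,j}\}$ plus the dimension count. You are also right that Lemma~\ref{ortho_rep} as quoted gives only orthogonality, and your Schur-averaging derivation of $\|\varphi^\rho_{i,j}\|^2=|\Gamma|/d_\rho$ correctly supplies the missing normalization.
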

	Define $\mathfrak{f}\in\homo(\mathcal{H}(\Gamma),\mathcal{H}(\Gamma))$  by $\mathfrak{f}=\mathcal{F}f_S\mathcal{F}^{-1}$. Then
	$\mathfrak{f}(T_{i,j}^\rho)=\frac{d_\rho}{|\Gamma|} \mathcal{F}f_S\compconj{\varphi_{i,j}^\rho}.$ Now
	\begin{align*}
		f_{S}\overline{\varphi^{\rho}_{i,j}}(g) &= \sum_{s\in S}\overline{\varphi^{\rho}_{i,j}}(s^{-1}g)\\
		&= \sum_{s\in S}\overline{\ip{\rho(s^{-1}g)v^{\rho}_{j}}{v^{\rho}_{i}}}_{W_\rho}\\
		&= \sum_{s\in S}\overline{\ip{\rho(g)v^{\rho}_{j}}{\rho(s)v^{\rho}_{i}}}_{W_\rho}\\
		&= \sum_{s\in S}\overline{\ip{\rho(g)v^{\rho}_{j}}{\sum_{k=1}^{d_{\rho}}\ip{\rho(s)v^{\rho}_{i}}{v^{\rho}_{k}}_{W_\rho}v^{\rho}_{k}}}_{W_\rho}\\
		&= \sum_{s\in S}\sum_{k=1}^{d_{\rho}}\ip{\rho(s)v^{\rho}_{i}}{v^{\rho}_{k}}_{W_\rho}\overline{\ip{\rho(g)v^{\rho}_{j}}{v^{\rho}_{k}}}_{W_\rho}\\
		&= \sum_{k=1}^{d_{\rho}}\ip{\sum_{s\in S}\rho(s)v^{\rho}_{i}}{v^{\rho}_{k}}_{W_\rho}\overline{\ip{\rho(g)v^{\rho}_{j}}{v^{\rho}_{k}}}_{W_\rho}\\
		&= \sum_{k=1}^{d_{\rho}}\ip{\sum_{s\in S}\rho(s)v^{\rho}_{i}}{v^{\rho}_{k}}_{W_\rho}\overline{\varphi^{\rho}_{k,j}(g)}.
	\end{align*}
	Thus
	\[ \mathfrak{f}(T^{\rho}_{i,j})=\frac{d_{\rho}}{|\Gamma|}\sum_{k=1}^{d_{\rho}}{\ip{\sum_{s\in S}\rho(s)v^{\rho}_{i}}{v^{\rho}_{k}}}_{W_\rho}\mathcal{F}\overline{\varphi^{\rho}_{k,j}}.\]
	Therefore by Lemma~\ref{fourier}, 
	\begin{equation}\label{ev_rep}
		\mathfrak{f}(T^{\rho}_{i,j})=\sum_{k=1}^{d_{\rho}}{\ip{\sum_{s\in S}\rho(s)v^{\rho}_{i}}{v^{\rho}_{k}}}_{W_\rho}T^{\rho}_{k,j},\quad \rho\in\widehat{\Gamma} ,1\leq i,j\leq d_{\rho}.
	\end{equation}
	
	A Cayley graph $\cay(\Gamma,S)$ is said to be \emph{normal} if $S$ is a normal subset of $\Gamma$, that is, if $S$ is a union of some conjugacy classes of $\Gamma$; otherwise, it is called \emph{non-normal}. For normal Cayley graphs, the eigenvalues and eigenvectors of the adjacency matrix are well studied (see, for instance, \cite[pp. 69--70]{steinberg} and \cite[Theorem~9]{ramanujan_graphs}). In contrast, the eigenvalues and eigenvectors of the adjacency matrix of non-normal Cayley graphs have received considerably less attention in the literature. In the next section, we present the eigenvalues and eigenvectors of the adjacency matrix of $\cay(D_n, S)$ without assuming that $S$ is normal.  The non-normal case was first considered by Xiwang Cao, Bocong Chen, and San Ling in the context of continuous-time quantum walks in the paper ``Perfect state transfer on Cayley graphs over dihedral groups: the non-normal case, \emph{Electron.\ J.\ Combin.}, 27(2):Paper No.\ 2.28, 18, 2020". However, the paper was later retracted due to errors in the main results.

	\subsection{\boldmath Eigenvalues and eigenvectors of $\cay(D_n,S)$}
	For a positive integer $n\geq 2$, the dihedral group $D_n$ is defined by $D_n=\left\langle a,b: a^n=1=b^2,bab=a^{-1} \right\rangle$. The irreducible representations of $D_n$ are given in the following  result. We denote by $\omega_n$ the complex number $e^{\frac{2\pi \iu}{n}}$.
	\begin{lemma}[{\cite[pp.~37--38]{rep_dihedral}}]
		Let $n$ be a positive integer. Then the irreducible representations of the dihedral group $D_n$ are given in Table~\ref{irr_n_odd} for odd $n$ and in Table~\ref{irr_n_even} for even $n$.
	\end{lemma}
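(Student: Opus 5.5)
The statement is the standard classification of the irreducible representations of $D_n$, which the paper cites from the literature. To justify it I will build explicit representations, check that they are irreducible and pairwise inequivalent, and then use the sum-of-squares formula $\sum_{\rho\in\widehat{\Gamma}} d_\rho^2=|\Gamma|$ to show that nothing is missing. Since $|D_n|=2n$, once enough inequivalent irreducibles are written down with total squared degree $2n$, the list is complete.

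\textbf{One-dimensional representations.} These factor through the abelianization of $D_n$. A homomorphism to $\Cl^\times$ is determined by $\chi(a)$ and $\chi(b)$, subject to the relations:
\begin{itemize}
\item $\chi(b)^2=1$ from $b^2=1$;
\item $\chi(a)^n=1$ from $a^n=1$;
\item $\chi(a)=\chi(a)^{-1}$ from $bab=a^{-1}$, so $\chi(a)=\pm1$.
\end{itemize}
If $n$ is odd, $\chi(a)^n=1$ forces $\chi(a)=1$, leaving the $2$ characters given by $\chi(b)=\pm1$. If $n$ is even, both choices $\chi(a)=\pm1$ and $\chi(b)=\pm1$ are allowed, giving $4$ characters.

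\textbf{Two-dimensional representations.} For $1\le h$, define
\[
\rho_h(a)=\begin{pmatrix}\omega_n^h&0\\0&\omega_n^{-h}\end{pmatrix},\qquad
\rho_h(b)=\begin{pmatrix}0&1\\1&0\end{pmatrix}.
\]
These matrices satisfy the defining relations of $D_n$, and they are unitary.

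Irreducibility means there is no common eigenvector of $\rho_h(a)$ and $\rho_h(b)$. When $\omega_n^h\neq\omega_n^{-h}$, i.e.\ $2h\not\equiv0 \pmod n$, the only eigenvectors of $\rho_h(a)$ are the coordinate axes. The swap matrix $\rho_h(b)$ exchanges these axes, so neither line is invariant and $\rho_h$ is irreducible.

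For inequivalence, compare characters: $\chi_{\rho_h}(a^k)=2\cos(2\pi hk/n)$. These functions are distinct for $h$ in the ranges
\begin{itemize}
\item $1\le h\le (n-1)/2$ when $n$ is odd;
\item $1\le h\le n/2-1$ when $n$ is even.
\end{itemize}
The values $h$ and $n-h$ give equivalent representations, which is why the range stops at the middle. In the even case $h=n/2$ is excluded because $\rho_{n/2}$ decomposes into two of the one-dimensional characters with $\chi(a)=-1$.

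\textbf{Completeness.} For odd $n$ the squared degrees sum to
\[
2\cdot1+\tfrac{n-1}{2}\cdot4=2n.
\]
For even $n$ they sum to
\[
4\cdot1+\left(\tfrac n2-1\right)\cdot4=2n.
\]
In both cases the total equals $|D_n|$, so the lists match Tables~\ref{irr_n_odd} and \ref{irr_n_even}.

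The only delicate step is the bookkeeping in the two-dimensional family: pinning down the exact range of $h$, the identification of $\rho_h$ with $\rho_{n-h}$, and the degenerate value $h=n/2$ for even $n$. Everything else is a direct check of the relations.
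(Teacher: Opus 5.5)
Your proof is correct and is the standard argument for this classification. The paper gives no proof of its own and only cites the source; your route (explicit construction, the invariant-line argument for irreducibility when $2h\not\equiv 0 \pmod n$, inequivalence via the characters $2\cos(2\pi hk/n)$, and completeness from $\sum_\rho d_\rho^2=2n$) is the argument found there. One point is worth making explicit to tie it to the tables: $\rho_h(ba^k)=\rho_h(b)\rho_h(a)^k$ reproduces the entry $\begin{pmatrix}0&\omega_n^{-hk}\\ \omega_n^{hk}&0\end{pmatrix}$, and $\psi_3,\psi_4$ take the values $(-1)^k$ and $(-1)^{k+1}$ on $ba^k$.
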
		
	\begin{table}[h]
		\centering
		\begin{tabular}{|c|c|c|}
			\hline
			& $a^{k}\ (0 \le k \le n-1)$ & $ba^{k}\ (0 \le k \le n-1)$ \\
			\hline
			$\psi_{1}$ & $1$ & $1$ \\
			$\psi_{2}$ & $1$ & $-1$ \\
			$\rho_h \ \left(1 \le h \le \lfloor (n-1)/2 \rfloor\right)$
			&
			$\begin{pmatrix}
				\omega_n^{hk} & 0 \\
				0 & \omega_n^{-hk}
			\end{pmatrix}$
			&
			$\begin{pmatrix}
				0 & \omega_n^{-hk} \\
				\omega_n^{hk} & 0
			\end{pmatrix}$ \\
			\hline
		\end{tabular}
		\caption{Irreducible representations of $D_n$ ($n$ odd).}
		\label{irr_n_odd}
	\end{table}
	\begin{table}[h]
		\centering
		\begin{tabular}{|c|c|c|}
			\hline
			& $a^{k}\ (0 \le k \le n-1)$ & $ba^{k}\ (0 \le k \le n-1)$ \\
			\hline
			$\psi_{1}$ & $1$ & $1$ \\
			$\psi_{2}$ & $1$ & $-1$ \\
			$\psi_{3}$ & $(-1)^k$ & $(-1)^k$ \\
			$\psi_{4}$ & $(-1)^k$ & $(-1)^{k+1}$ \\
			$\rho_h \ \left(1 \le h \le \lfloor (n-1)/2 \rfloor\right)$
			&
			$\begin{pmatrix}
				\omega_n^{hk} & 0 \\
				0 & \omega_n^{-hk}
			\end{pmatrix}$
			&
			$\begin{pmatrix}
				0 & \omega_n^{-hk} \\
				\omega_n^{hk} & 0
			\end{pmatrix}$ \\
			\hline
		\end{tabular}
		\caption{Irreducible representations of $D_n$ ($n$ even).}
		\label{irr_n_even}
	\end{table}	
	Let $\cay(D_n,S)$ be a Cayley graph with $1\notin S$ and $S=S^{-1}$. 
	Define $S_1 = S \cap \langle a\rangle$ and $S_2 = bS \cap \langle a\rangle$.
	We further decompose $S_1$ and $S_2$ according to the parity of the exponent of $a$.
	\begin{align*}
		S_{1,0}&=\{a^k: 1\leq k \leq n-1,a^k\in S_1,~\text{$k$ even}\},\\
		S_{1,1}&=\{a^k: 1\leq k \leq n-1,a^k\in S_1,~\text{$k$ odd}\},\\
		S_{2,0}&=\{a^k: 1\leq k \leq n-1,a^k\in S_2,~\text{$k$ even}\}, \text{ and }\\
		S_{2,1}&=\{a^k: 1\leq k \leq n-1,a^k\in S_2,~\text{$k$ odd}\}.
	\end{align*}	
	For convenience, let
	\[
	s_i = |S_i|,\quad d_{i,0} = |S_{i,0}|,\quad d_{i,1} = |S_{i,1}| 
	\quad\text{for } i \in \{1,2\}~\text{and $d = |S|$.}
	\]
	For any complex number $z$,  we write $\{z^{k}\}_{k=0}^{n-1} := (z^{0},\,z^{1},\,\ldots,\,z^{(n-1)})$ to denote a row vector. Similarly, for two complex numbers $z$ and $w$, $\left(\{z^{k}\}_{k=0}^{n-1}, \{w^{k}\}_{k=0}^{r-1}\right):=\left(z^{0},\,z^{1},\,\ldots,\,z^{(n-1)},w^{0},\,w^{1},\,\ldots,\,w^{(r-1)} \right)$. Furthermore, $\mathbf{0}_n$ and $\mathbf{1}_n$ represent 
	the row vectors consisting of $n$ zeros and $n$ ones, respectively, and $-\mathbf{1}_n$ denotes the row 
	vector of $n$ entries, each equal to $-1$.

	We now focus on the eigenvalues and eigenvectors of the adjacency matrix of $\cay(D_n,S)$ for even $n$. Since the case for odd $n$ can be treated as a special instance of the even case, we restrict our attention to even $n$. The following theorem provides the eigenvalues and the corresponding normalized eigenvectors of $\cay(D_n,S)$ associated with the one-dimensional representations of $D_n$ for even $n$. 
	\begin{theorem}\label{ev1_dn}
		For even $n$, the eigenvalues and the corresponding normalized eigenvectors of the adjacency matrix of 
		$\cay(D_n, S)$ associated with the one-dimensional representations of $D_n$ are as follows.
		\begin{enumerate}
			\item[(i)] For the character $\psi_1$, the eigenvalue and a corresponding eigenvector are respectively given by
			\begin{equation*}
				\widetilde{\lambda}_1 = d\quad\text{and}\quad
				v_1 = \frac{1}{\sqrt{2n}}
				\mathbf{1}_{2n}^{t}.
			\end{equation*}
			
			\item[(ii)] For the character $\psi_2$, the eigenvalue and a corresponding eigenvector are respectively given by
			\begin{equation*}
				\widetilde{\lambda}_2  = s_1 - s_2\quad\text{and}\quad
				v_2 = \frac{1}{\sqrt{2n}}
				\begin{pmatrix}
					\mathbf{1}_{n},
					-\mathbf{1}_{n}
				\end{pmatrix}^{t}.
			\end{equation*}
			\item[(iii)] For the character $\psi_3$, the eigenvalue and a corresponding eigenvector are respectively given by
			\begin{align*}
				\widetilde{\lambda}_3  &= d_{1,0}-d_{1,1}+d_{2,0}-d_{2,1}\quad\text{and}\\
				v_3 &= \frac{1}{\sqrt{2n}}
				\begin{pmatrix}
					\{(-1)^i\}_{i=0}^{ n-1},\;
					\{(-1)^i\}_{i=0}^{ n-1}
				\end{pmatrix}^{t}.
			\end{align*}
			\item[(iv)] For the character $\psi_4$, the eigenvalue and a corresponding eigenvector are respectively given by
			\begin{align*}
				\widetilde{\lambda}_4 &= d_{1,0}-d_{1,1}+d_{2,1}-d_{2,0}\quad\text{and}\\
				v_4 &= \frac{1}{\sqrt{2n}}
				\begin{pmatrix}
					\{(-1)^i\}_{i=0}^{ n-1},\;
					\{(-1)^{i+1}\}_{i=0}^{ n-1}
				\end{pmatrix}^{t}.
			\end{align*}
		\end{enumerate}
	\end{theorem}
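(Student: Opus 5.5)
The approach is a direct verification: each listed vector is essentially a one-dimensional character, so we compute $f_S$ applied to it and read off the eigenvalue. Order the vertices as $(1,a,\dots,a^{n-1},b,ba,\dots,ba^{n-1})$, which matches the block form of the vectors $v_1,\dots,v_4$. By the discussion preceding \eqref{ev_rep}, the adjacency matrix of $\cay(D_n,S)$ is the matrix of $f_S$, where $f_S(f)(g)=\sum_{s\in S}f(s^{-1}g)$. If $\psi$ is a one-dimensional character (a homomorphism $D_n\to\{\pm1\}$), then
\[
f_S(\psi)(g)=\sum_{s\in S}\psi(s^{-1})\psi(g)=\Big(\sum_{s\in S}\psi(s)\Big)\psi(g),
\]
using $\psi(s^{-1})=\psi(s)^{-1}=\psi(s)$ for real $\pm1$ values. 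Hence $\psi$, viewed as a vector in $\Cl^{2n}$, is an eigenvector with eigenvalue $\psi(S):=\sum_{s\in S}\psi(s)$. This is also the $d_\rho=1$ case of \eqref{ev_rep}. Normalisation is immediate, since every entry has modulus one and there are $2n$ entries, giving the factor $1/\sqrt{2n}$.

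The next step is to read off the vectors from Table~\ref{irr_n_even}. The values of $\psi_1$ give $\mathbf 1_{2n}$. The values of $\psi_2$ are $1$ on $a^k$ and $-1$ on $ba^k$. The values of $\psi_3$ are $(-1)^k$ on both blocks, and the values of $\psi_4$ are $(-1)^k$ and $(-1)^{k+1}$. These agree with $v_1,\dots,v_4$. The one subtlety is the convention for the second block: the second coset should be indexed as $ba^k$, consistent with the table.

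It remains to evaluate $\psi(S)$ by splitting $S=S_1\sqcup bS_2$. Here $bS_2=\{ba^k: a^k\in S_2\}$, since $ba^k\in S$ if and only if $a^k\in bS\cap\langle a\rangle$. Then:
\begin{itemize}
\item $\psi_1(S)=|S|=d$.
\item $\psi_2(S)=s_1-s_2$.
\item $\psi_3(S)=\sum_{a^k\in S_1}(-1)^k+\sum_{a^k\in S_2}(-1)^k=d_{1,0}-d_{1,1}+d_{2,0}-d_{2,1}$.
\item $\psi_4(S)=d_{1,0}-d_{1,1}+\sum_{a^k\in S_2}(-1)^{k+1}=d_{1,0}-d_{1,1}+d_{2,1}-d_{2,0}$.
\end{itemize}
For $\psi_3$ and $\psi_4$, the parity of $k$ is well defined modulo $n$ because $n$ is even, which is exactly why these characters exist only in the even case.

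The main obstacle is bookkeeping rather than mathematics. One point is that the exponent range in $S_{2,0}$ and $S_{2,1}$ is $1\le k\le n-1$, so the element $b=ba^0$, i.e.\ $a^0\in S_2$, is omitted. If $b\in S$, it contributes $+1$ to $\psi_3(S)$ and $-1$ to $\psi_4(S)$, and these contributions must be placed in $d_{2,0}$. I would therefore first check whether the authors intend $k=0$ to count as even for $S_2$; I would interpret $S_{2,0}$ as including $a^0$ when $b\in S$, and note this explicitly. The other point is to confirm that the convention $f_S(f)(g)=\sum_s f(s^{-1}g)$ yields the adjacency matrix, and not its transpose, under the ordering above. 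Since $A$ is symmetric (because $S=S^{-1}$), this does not affect the result.
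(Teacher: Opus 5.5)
Your proof is correct and is essentially the paper's argument. The paper applies \eqref{ev_rep} with $d_\rho=1$ to get $\mathfrak{f}(T^{\psi}_{1,1})=\bigl(\sum_{s\in S}\psi(s)\bigr)T^{\psi}_{1,1}$ and pulls back via $\mathcal{F}^{-1}T^{\psi}_{1,1}=\frac{1}{2n}\overline{\psi}$; you verify the same eigen-identity directly from $f_S(f)(g)=\sum_{s\in S}f(s^{-1}g)$, which is shorter and self-contained. Your caveat about $a^0$ is also justified: with the stated range $1\le k\le n-1$, the element $b$ would be dropped from $d_{2,0}$, and the paper's own computations (e.g.\ $\widetilde{\mu}_3=0$ for $S=\{b,ba\}$ in Example~\ref{eg-pst3}) are consistent only with your inclusive reading.
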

	\begin{proof}
		\begin{enumerate}
			\item[(i)] For the character $\psi_1$, we have $W_{\psi_1}=\Cl$. Therefore $d_{\psi_{1}}=1$ and $\{v_1^{\psi_1}\}=\{1\}$. Hence by \eqref{ev_rep}, we obtain
			\[\mathfrak{f}(T^{\psi_{1}}_{1,1})=\ip{\sum_{s\in S}\psi_1(s)v^{\psi_{1}}_{1}}{v^{\psi_{1}}_{1}}T^{\psi_{1}}_{1,1}=\sum_{s\in S} \psi_1(s)T^{\psi_1}_{1,1}=dT^{\psi_{1}}_{1,1}.\]
			Consequently,
			\[f_{S}\mathcal{F}^{-1}T^{\psi_{1}}_{1,1}=d\mathcal{F}^{-1}T^{\psi_{1}}_{1,1}.\]
			
			Let $g\in\Gamma$. From Lemma~\ref{fourier}, we have 
			$$\mathcal{F}^{-1}T^{\psi_{1}}_{1,1}(g)=\frac{1}{2n} \overline{\varphi_{1,1}^{\psi_1}}(g)=\frac{1}{2n}\overline{\psi_1(g)}=\frac{1}{2n}.$$			
			Thus $\widetilde{\lambda}_1 = d$ and the corresponding normalized eigenvector is $v_1 = \frac{1}{\sqrt{2n}}
			\mathbf{1}_{2n}^{t}$.
			\item[(ii)] Similar to Part (i), we find that $\widetilde{\lambda}_2  = s_1 - s_2$ and 
			$v_2 = \frac{1}{\sqrt{2n}}
			\begin{pmatrix}
				\mathbf{1}_{n},
				-\mathbf{1}_{n}
			\end{pmatrix}^{t}.$
			\item[(iii)] For the character $\psi_3$ also, $d_{\psi_{3}}=1$ and  $\{v_1^{\psi_3}\}=\{1\}$. Applying \eqref{ev_rep},  we have
			\[\mathfrak{f}(T^{\psi_{3}}_{1,1})=\ip{\sum_{s\in S}\psi_3(s)v^{\psi_{3}}_{1}}{v^{\psi_{3}}_{1}}T^{\psi_{3}}_{1,1}=(d_{1,0}-d_{1,1}+d_{2,0}-d_{2,1})T^{\psi_{3}}_{1,1}.\]
			This implies that
			\[f_{S}\mathcal{F}^{-1}T^{\psi_{3}}_{1,1}=(d_{1,0}-d_{1,1}+d_{2,0}-d_{2,1})\mathcal{F}^{-1}T^{\psi_{3}}_{1,1},\]
			yielding $\widetilde{\lambda}_3 = d_{0,1} - d_{1,1} + d_{2,0} - d_{2,1}$ and  the corresponding normalized eigenvector is
			\[
			v_3 = \frac{1}{\sqrt{2n}}
			\begin{pmatrix}
				\{(-1)^i\}_{i=0}^{ n-1},\;
				\{(-1)^i\}_{i=0}^{ n-1}
			\end{pmatrix}^{t}.
			\]
			\item[(iv)] This part is similar to Part (iii). \qedhere
		\end{enumerate}
	\end{proof}
	For $C\subset \langle a\rangle \subset D_n$, define
	\[
	C^* := \{k \in \mathbb{Z}_n : a^k \in C\}.
	\]
	Then for $1 \leq h \leq \left\lfloor \frac{n-1}{2} \right\rfloor$, set
	\[
	\eta_h(C) := \sum_{k \in C^*} \omega_n^{hk},
	\]
	with the convention that, if $C=\emptyset$ then $\eta_h(C)=0$. 
	
	For $\cay(D_n,S)$, the connection set $S$ is conjugacy-closed if and only if  
	\[
	S_2 \in 
	\begin{cases}
		\left\{\emptyset,\langle a\rangle\right\} & \text{if $n$ is odd}\\ 
		\left\{\emptyset,\langle a\rangle,\langle a^2\rangle,a\langle a^2\rangle\right\} & \text{if $n$ is even}.
	\end{cases}
	\]
	Moreover, the values of $\eta_h(S_2)$ can be used to determine whether the Cayley graph is normal, as shown in the following result.
	\begin{lemma}\label{conjugacy}
		Let $n\ge 2$ and $S_2\subseteq \langle a\rangle$. Then $\eta_h(S_2)=0$ for $1\leq h\leq \lfloor (n-1)/2\rfloor$ if and only if 
		\[ S_2 \in 
		\begin{cases}
			\left\{\emptyset,\langle a\rangle\right\} & \text{if $n$ is odd}\\ 
			\left\{\emptyset,\langle a\rangle,\langle a^2\rangle,a\langle a^2\rangle\right\} & \text{if $n$ is even}.
		\end{cases}
		\]   	
	\end{lemma}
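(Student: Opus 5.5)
The plan is to pass to the indicator vector of $S_2^*$ and use the finite Fourier transform on $\Zl_n$. Let $f=\mathbf{1}_{S_2^*}\colon \Zl_n\to\{0,1\}$ and $\hat f(h)=\sum_{k\in\Zl_n} f(k)\,\omega_n^{hk}$, so that $\eta_h(S_2)=\hat f(h)$. Since $f$ is real, $\hat f(n-h)=\overline{\hat f(h)}$.

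It follows that $\eta_h(S_2)=0$ for $1\le h\le\lfloor (n-1)/2\rfloor$ is equivalent to $\hat f(h)=0$ for every $h\in\Zl_n\setminus\{0\}$ when $n$ is odd. When $n$ is even it is equivalent to $\hat f(h)=0$ for every $h\in\Zl_n\setminus\{0,n/2\}$, because $h$ and $n-h$ together exhaust these residues.

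\emph{Sufficiency.} This is a direct computation. For $S_2=\emptyset$ the sum is empty. For $S_2=\langle a\rangle$ we get $\sum_{k=0}^{n-1}\omega_n^{hk}=0$ because $\omega_n^h\neq 1$. For even $n$ and $S_2=\langle a^2\rangle$ we get $\sum_{j=0}^{n/2-1}\omega_{n/2}^{hj}$, which is $0$ since $h\not\equiv 0 \pmod{n/2}$ in the range $1\le h\le (n-2)/2$. For $S_2=a\langle a^2\rangle$ the sum is $\omega_n^h$ times the previous one, hence also $0$.

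\emph{Necessity.} Apply Fourier inversion $f(k)=\frac1n\sum_{h\in\Zl_n}\hat f(h)\,\omega_n^{-hk}$.
\begin{itemize}
\item For odd $n$, only $h=0$ survives, so $f(k)=\hat f(0)/n=s_2/n$ is constant. Since $f$ takes values in $\{0,1\}$, $f\equiv 0$ or $f\equiv 1$, i.e.\ $S_2\in\{\emptyset,\langle a\rangle\}$.
\item For even $n$, the terms $h=0$ and $h=n/2$ survive, giving $f(k)=\frac1n\big(\hat f(0)+(-1)^k\hat f(n/2)\big)$. So $f$ is constant on the even residues and constant on the odd residues. Each constant lies in $\{0,1\}$, which yields exactly the four options $\emptyset$, $\langle a\rangle$, $\langle a^2\rangle$, $a\langle a^2\rangle$.
\end{itemize}

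The only delicate point is bookkeeping. One has to check that the conjugate symmetry $\hat f(n-h)=\overline{\hat f(h)}$ carries the vanishing from $1\le h\le\lfloor (n-1)/2\rfloor$ to all $h$ other than $0$ (and $n/2$ for even $n$). One also has to treat the degenerate case $n=2$, where the range of $h$ is empty and the conclusion is that $S_2$ is an arbitrary subset of $\langle a\rangle=\{1,a\}$. All four such subsets appear in the list, so the argument remains consistent.
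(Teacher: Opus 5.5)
Your proposal is correct and follows essentially the same route as the paper's proof: the indicator function of $S_2^*$, its discrete Fourier transform, conjugate symmetry $\eta_{n-h}=\overline{\eta_h}$ to extend the vanishing, and Fourier inversion to get constancy on parity classes. You also spell out the sufficiency computation, the odd case, and the degenerate case $n=2$, all of which the paper leaves as ``direct calculation'' or ``the same argument.''
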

	\begin{proof}
		First assume that $n$ is even, say $n=2m$ for some positive integer $m$. Define $f:\mathbb{Z}_n\to\{0,1\}$ by
		\[
		f(k)=\left\{ 
		\begin{array}{rl}
			1 & \text{if }a^k\in S_2\\
			0&\text{otherwise.}
		\end{array}
		\right.
		\]
		Then
		\[
		\eta_h(S_2)=\sum_{k=0}^{n-1} f(k)\,\omega_n^{hk}.
		\]
		Thus $\eta_h(S_2)$ is the discrete Fourier transform of $f$. Suppose $\eta_h(S_2)=0$ for $1\leq h\leq m-1$. Then using 
		$\eta_{n-h}(S_2)=\overline{\eta_h(S_2)},$
		we obtain $\eta_h(S_2)=0$ for  $h\in\{1,\ldots,m-1, m+1,\ldots, 2m-1\}$. Then the inverse transform gives
		\[
		f(k)=\frac{1}{n}\left(\eta_0(S_2)+\eta_m(S_2)\omega_n^{-mk}\right)
		=\alpha+\beta(-1)^k,
		\]
		where $\alpha=\eta_0(S_2)/n$ and $\beta=\eta_m(S_2)/n$. Thus $f(k)$ is constant on even indices and constant on odd indices, that is, $f(2k)=f(0)$ and $f(2k+1)=f(1)$ for all $k\in\Zl_n$.
		Since $f(k)\in\{0,1\}$, note that $(f(0),f(1))\in\{(0,0),(1,1),(1,0),(0,1)\}$.
		This implies that $S_2\in\left\{\emptyset,\ \langle a\rangle,\ \langle a^2\rangle,\ a\langle a^2\rangle \right\}.$ 
		
		Conversely, a direct calculation confirms that for each $S_2\in\left\{\emptyset,\ \langle a\rangle,\ \langle a^2\rangle,\ a\langle a^2\rangle\right\}$, we have $\eta_h(S_2)=0$ for $1\le h\le m-1$. The same argument applies when $n$ is odd.
	\end{proof}
	We now study the eigenvalues and eigenvectors corresponding to the two-dimensional representations $\rho_h$ of $D_n$.
	\begin{theorem}\label{ev2_dn}
		The eigenvalues of the adjacency matrix of $\cay(D_n,S)$ corresponding to $\rho_h$ are given by
		\[
		\ld_h^{(1)}=\eta_h(S_1)+\sqrt{\eta_h(S_2)\eta_h(S_2^{-1})}\quad\text{and}\quad
		\ld_h^{(2)}=\eta_h(S_1)-\sqrt{\eta_h(S_2)\eta_h(S_2^{-1})},
		\]
		each of multiplicity $2$.  
		Moreover, the corresponding eigenvectors are given as follows.
		\begin{enumerate}
			\item[(i)] If $\eta_h(S_2)=0$, then the normalized eigenvectors corresponding to $\ld_h:=\eta_h(S_1)$ are
			\begin{align*}
				u^{(1)}_h&= \frac{1}{\sqrt{n}}	\begin{pmatrix}	\{\omega_n^{hk}\}_{k=0}^{n-1},\mathbf{0}_{n}\end{pmatrix}^{t},\\
				u^{(2)}_h&=\frac{1}{\sqrt{n}}\begin{pmatrix}\mathbf{0}_{n},\{\omega_n^{hk}\}_{k=0}^{n-1}	\end{pmatrix}^{t},\\
				u^{(3)}_h&=  \frac{1}{\sqrt{n}}\begin{pmatrix}\mathbf{0}_{n},\{\omega_n^{-hk}\}_{k=0}^{n-1}	\end{pmatrix}^{t},\text{ and}\\
				u^{(4)}_h&=\frac{1}{\sqrt{n}}\begin{pmatrix}	\{\omega_n^{-hk}\}_{k=0}^{n-1},\mathbf{0}_{n}	\end{pmatrix}^{t}.
			\end{align*}
			\item[(ii)] If $\eta_h(S_2)\neq 0$, let $\ell_h=\sqrt{\frac{\eta_h(S_2^{-1})}{\eta_h(S_2)}}$.
			Then the normalized eigenvectors corresponding to $\ld_h^{(1)}$ are
			\begin{align*}
				v^{(1)}_h&=	\frac{1}{\sqrt{2n}}\left(\bar{\ell}_{h},\bar{\ell}_{h}{\omega}_n^{h}, \cdots, \bar{\ell}_{h}{\omega}_n^{(n-1)h},1,{\omega}_n^{h}, \cdots,{\omega}_n^{(n-1)h} \right)^t\text{ and}\\
				v^{(2)}_h&=\frac{1}{\sqrt{2n}}\left(1,{\omega}_n^{-h}, \cdots,{\omega}_n^{-(n-1)h}, \bar{\ell}_{h},\bar{\ell}_{h}{\omega}_n^{-h}, \cdots, \bar{\ell}_{h}{\omega}_n^{-(n-1)h}\right)^t,
			\end{align*}
			and the normalized eigenvectors corresponding to  $\ld_h^{(2)}$ are
			\begin{align*}
				v^{(3)}_h&= \frac{1}{\sqrt{2n}}\left(-1,-{\omega}_n^{h}, \cdots,-{\omega}_n^{(n-1)h}, {\ell}_{h},{\ell}_{h} {\omega}_n^{h}, \cdots, {\ell}_{h} {\omega}_n^{(n-1)h}\right)^t\text{ and}\\
				v^{(4)}_h&= \frac{1}{\sqrt{2n}}\left({\ell}_{h},{\ell}_{h} {\omega}_n^{-h}, \cdots, {\ell}_{h} {\omega}_n^{-(n-1)h},-1,-{\omega}_n^{-h}, \cdots,-{\omega}_n^{-(n-1)h} \right)^t.
			\end{align*}
		\end{enumerate}
	\end{theorem}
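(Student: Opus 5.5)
We apply \eqref{ev_rep} to $\rho=\rho_h$ with the standard basis $v_1^{\rho_h}=e_1$, $v_2^{\rho_h}=e_2$ of $\Cl^2$. The aim is to compute the $2\times 2$ matrix $M_h=\sum_{s\in S}\rho_h(s)$, which by \eqref{ev_rep} describes how $\mathfrak{f}$ acts on $\mathrm{span}\{T^{\rho_h}_{1,j},T^{\rho_h}_{2,j}\}$ for each fixed $j\in\{1,2\}$.

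\textbf{Computing $M_h$.} Split $S=S_1\cup bS_2$; note $bS\cap\langle a\rangle=S_2$ gives $S\setminus\langle a\rangle=\{ba^k: a^k\in S_2\}$. From Table~\ref{irr_n_even}, the rotations contribute $\mathrm{diag}(\eta_h(S_1),\eta_h(S_1^{-1}))$. Since $S_1=S_1^{-1}$, this equals $\eta_h(S_1)I$, and $\eta_h(S_1)$ is real. The reflections contribute the off-diagonal matrix with entries $\sum_{k\in S_2^*}\omega_n^{-hk}=\eta_h(S_2^{-1})$ in position $(1,2)$ and $\eta_h(S_2)$ in position $(2,1)$. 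Thus
\[M_h=\begin{pmatrix}\eta_h(S_1)&\eta_h(S_2^{-1})\\ \eta_h(S_2)&\eta_h(S_1)\end{pmatrix}.\]
By \eqref{ev_rep}, for fixed $j$ the map $\mathfrak{f}$ restricted to $\mathrm{span}\{T_{1,j},T_{2,j}\}$ is given by $M_h^{t}$: the coefficient of $T_{k,j}$ in $\mathfrak{f}(T_{i,j})$ is $\ip{M_h e_i}{e_k}=(M_h)_{k,i}$. Either way, its eigenvalues are $\eta_h(S_1)\pm\sqrt{\eta_h(S_2)\eta_h(S_2^{-1})}$. Because $\eta_h(S_2^{-1})=\overline{\eta_h(S_2)}$, the product under the root is $|\eta_h(S_2)|^2\ge0$, so both eigenvalues are real. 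The two choices $j=1,2$ give the two copies, hence multiplicity $2$ each. These eigenvalues transfer to the adjacency matrix via conjugation by $\mathcal{F}$ (Lemma~\ref{fourier}), and the eigenvectors are obtained as $\mathcal{F}^{-1}$ of eigen-combinations $\alpha T_{1,j}+\beta T_{2,j}$, which Lemma~\ref{fourier} identifies with $\frac{2}{2n}(\alpha\overline{\varphi_{1,j}}+\beta\overline{\varphi_{2,j}})$.

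\textbf{Eigenvectors.} We read off the functions $\overline{\varphi_{i,j}^{\rho_h}}(g)=\overline{\rho_h(g)_{i,j}}$ from the table, ordering vertices as $a^0,\dots,a^{n-1},ba^0,\dots,ba^{n-1}$:
\begin{itemize}
\item $\overline{\varphi_{1,1}}=(\{\omega_n^{-hk}\},\mathbf 0_n)$,
\item $\overline{\varphi_{2,2}}=(\{\omega_n^{hk}\},\mathbf 0_n)$,
\item $\overline{\varphi_{1,2}}=(\mathbf 0_n,\{\omega_n^{hk}\})$,
\item $\overline{\varphi_{2,1}}=(\mathbf 0_n,\{\omega_n^{-hk}\})$.
\end{itemize}

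In case (i), $M_h$ is scalar, so all four functions are eigenvectors with eigenvalue $\eta_h(S_1)$. Normalizing by $1/\sqrt n$ gives $u_h^{(1)},\dots,u_h^{(4)}$.

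In case (ii), the eigenvectors of the $2\times2$ matrix are $(\beta/\alpha)=\pm\sqrt{\eta_h(S_2)/\eta_h(S_2^{-1})}$, up to the choice of transpose convention, i.e.\ ratios $\pm\ell_h^{-1}$ or $\pm\ell_h$. Since $|\eta_h(S_2^{-1})|=|\eta_h(S_2)|$, we have $|\ell_h|=1$ and hence $\ell_h^{-1}=\bar\ell_h$. Combining with the above functions (for $j=2$ giving $v_h^{(1)},v_h^{(3)}$ and $j=1$ giving $v_h^{(2)},v_h^{(4)}$), and normalizing using $|\ell_h|=1$, produces the stated vectors with factor $1/\sqrt{2n}$.

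\textbf{Main obstacle and check.} The main difficulty is bookkeeping: the transpose/conjugation conventions in \eqref{ev_rep}, and the branch of the square roots so that $\ell_h$ is consistent with $\sqrt{\eta_h(S_2)\eta_h(S_2^{-1})}$. To guard against sign or branch errors, I would verify each claimed vector directly against the adjacency action $f_S(f)(g)=\sum_{s\in S}f(s^{-1}g)$. For example, for $v_h^{(1)}$, evaluate at $g=a^m$ and $g=ba^m$ using $s^{-1}a^m$ and $s^{-1}ba^m$ for $s\in S_1$ and $s=ba^k$. This reduces to the identity $\bar\ell_h\,\eta_h(S_2)=\sqrt{\eta_h(S_2)\eta_h(S_2^{-1})}$ (with the branch chosen accordingly), which holds by definition of $\ell_h$ with consistent branches. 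Orthonormality and linear independence then follow from Lemma~\ref{ortho_rep}.
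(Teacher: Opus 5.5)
Your proposal is essentially the paper's proof. You compute $M_h=\sum_{s\in S}\rho_h(s)$ via \eqref{ev_rep}, observe that $\mathfrak{f}$ acts by this same $2\times 2$ block on $\mathrm{span}\{T^{\rho_h}_{1,j},T^{\rho_h}_{2,j}\}$ for $j=1,2$, and pull eigenvectors back through $\mathcal{F}^{-1}$ to combinations of $\overline{\varphi^{\rho_h}_{1,j}}$ and $\overline{\varphi^{\rho_h}_{2,j}}$. Your remarks that $|\ell_h|=1$ and that the branch of $\ell_h$ must satisfy $\ell_h\eta_h(S_2)=|\eta_h(S_2)|$ make explicit a point the paper passes over. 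With the principal root this can fail, e.g.\ when $\eta_h(S_2)<0$, and then $v_h^{(1)}$ belongs to $\ld_h^{(2)}$.

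There are two slips to fix, both in the eigenvector bookkeeping.

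\textbf{No transpose ambiguity.} Your own coefficient formula $(M_h)_{k,i}$ shows that the coordinates $(\alpha,\beta)^t$ of $\alpha T^{\rho_h}_{1,j}+\beta T^{\rho_h}_{2,j}$ are transformed by $M_h$ itself, not $M_h^t$. Hence $\beta/\alpha=\pm\bar\ell_h$, which is the paper's $(\ell_h,1)^t$ and $(-1,\bar\ell_h)^t$. The other convention would give the vector with entries $\ell_h\omega_n^{hk}$ at $a^k$ and $\omega_n^{hk}$ at $ba^k$, which is not an eigenvector in general.

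\textbf{The direct check.} Verifying $v_h^{(1)}$ reduces to $\ell_h\eta_h(S_2)=\bar\ell_h\eta_h(S_2^{-1})=\sqrt{\eta_h(S_2)\eta_h(S_2^{-1})}$. It does not reduce to $\bar\ell_h\eta_h(S_2)=\sqrt{\eta_h(S_2)\eta_h(S_2^{-1})}$, which is false in general because $\bar\ell_h\eta_h(S_2)$ need not even be real.
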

	\begin{proof}
		For $1\leq h\leq\left\lfloor\frac{n-1}{2}\right\rfloor$,  recall from the Table~\ref{irr_n_even} that $\rho_{h}(a^{k})v_{1}^{\rho_{h}}= \omega_n^{hk}v_{1}^{\rho_{h}},$ $\rho_{h}(a^{k})v_{2}^{\rho_{h}}=\omega_n^{-hk}v_{2}^{\rho_{h}},$ $
		\rho_{h}(ba^k)v_{1}^{\rho_{h}}=\omega_n^{hk}v_{2}^{\rho_{h}},$ and $
		\rho_{h}(ba^k)v_{2}^{\rho_{h}}=\omega_n^{-hk}v_{1}^{\rho_{h}}.$
		Therefore by \eqref{ev_rep}, we find
		\begin{align*}
			\mathfrak f(T_{1,1}^{\rho_h})&=\eta_h(S_1)T_{1,1}^{\rho_h}+\eta_h(S_2)T_{2,1}^{\rho_h},\\
			\mathfrak f(T_{1,2}^{\rho_h})&=\eta_h(S_1)T_{1,2}^{\rho_h}+\eta_h(S_2)T_{2,2}^{\rho_h},\\
			\mathfrak f(T_{2,1}^{\rho_h})&=\eta_h(S_2^{-1})T_{1,1}^{\rho_h}+\eta_h(S_1)T_{2,1}^{\rho_h},\text{ and}\\
			\mathfrak f(T_{2,2}^{\rho_h})&=\eta_h(S_2^{-1})T_{1,2}^{\rho_h}+\eta_h(S_1)T_{2,2}^{\rho_h}.
		\end{align*}
		Thus the matrix of $\mathfrak{f}$ restricted to the subspace $\mathrm{span}\{T_{1,1}^{\rho_h},T_{2,1}^{\rho_h},T_{1,2}^{\rho_h},T_{2,2}^{\rho_h}\}$ with respect to the orthogonal basis $\{T_{1,1}^{\rho_h},T_{2,1}^{\rho_h},T_{1,2}^{\rho_h},T_{2,2}^{\rho_h}\}$  is
		\begin{equation}\label{mat}
			\begin{pmatrix}
				\eta_h(S_1)&\eta_h(S_2^{-1})&0&0\\
				\eta_h(S_2)&\eta_h(S_1)&0&0\\
				0&0&\eta_h(S_1)&\eta_h(S_2^{-1})\\
				0&0&\eta_h(S_2)&\eta_h(S_1)
			\end{pmatrix}.
		\end{equation}
		The matrix in \eqref{mat} is block‑diagonal with two identical $2\times2$ diagonal blocks
		\[
		M_h=\begin{pmatrix}
			\eta_h(S_1)&\eta_h(S_2^{-1})\\
			\eta_h(S_2)&\eta_h(S_1)
		\end{pmatrix}.
		\]
		The eigenvalues of $M_h$ are $\eta_h(S_1)\pm\sqrt{\eta_h(S_2)\eta_h(S_2^{-1})}$. Thus the eigenvalues of $\cay(D_n,S)$ corresponding to $\rho_h$ are $\ld_h^{(1)}$ and $\ld_h^{(2)}$, each with multiplicity $2$.

		\noindent\textbf{Case 1.}   $\eta_h(S_2)=0$. In this case, $M_h=\eta_h(S_1)I_2$, and therefore every nonzero vector is an eigenvector of $M_h$.
		In particular, $T_{1,1}^{\rho_h}$, $T_{2,1}^{\rho_h}$, $T_{1,2}^{\rho_h}$, $T_{2,2}^{\rho_h}$ are eigenvectors of $\mathfrak{f}$ with corresponding eigenvalue $\eta_h(S_1)$. Consequently,
		$\varphi_{1,1}^{\rho_h},\varphi_{2,1}^{\rho_h},
		\varphi_{1,2}^{\rho_h},\varphi_{2,2}^{\rho_h}$ are eigenvectors of $f_S$
		with corresponding eigenvalue $\eta_h(S_1)$. Now for $g\in D_n$, calculating $\varphi_{1,1}^{\rho_h}(g),\varphi_{2,1}^{\rho_h}(g),
		\varphi_{1,2}^{\rho_h}(g)$ and $\varphi_{2,2}^{\rho_h}(g)$ using \eqref{vm}, we find the normalized eigenvectors $u_h^{(1)}, u_h^{(2)}, u_h^{(3)}$ and $u_h^{(4)}$, respectively.		
		
		\noindent\textbf{Case 2.} $\eta_h(S_2)\neq 0$.
		In this case, $M_h$ has two distinct eigenvalues $\ld_h^{(1)}$ and $\ld_h^{(2)}$.
		For the eigenvalue $\ld_h^{(1)}$, we have
		\[
		M_h
		\begin{pmatrix}
			\ell_h\\
			1
		\end{pmatrix}
		=
		\ld_h^{(1)}
		\begin{pmatrix}
			\ell_h\\
			1
		\end{pmatrix},
		\]
		and hence $(\ell_h,1)^{t}$ is an eigenvector of $M_h$ with corresponding eigenvalue $\ld_h^{(1)}$.
		Therefore\linebreak[4] $\ell_h\,T_{1,1}^{\rho_h}+T_{2,1}^{\rho_h}\text{ and }		\ell_h\,T_{1,2}^{\rho_h}+T_{2,2}^{\rho_h}$
		are eigenvectors of $\mathfrak{f}$ with corresponding eigenvalue $\ld_h^{(1)}$.
		Consequently, $
		\bar\ell_h\,\varphi_{1,1}^{\rho_h}+\varphi_{2,1}^{\rho_h}\text{ and }
		\bar\ell_h\,\varphi_{1,2}^{\rho_h}+\varphi_{2,2}^{\rho_h}$
		are eigenvectors of $f_S$ with corresponding eigenvalue $\ld_h^{(1)}$.
		Now for $g\in D_n$, calculating $
		(\bar\ell_h\,\varphi_{1,1}^{\rho_h}+\varphi_{2,1}^{\rho_h})(g)$ and $(\bar\ell_h\,\varphi_{1,2}^{\rho_h}+\varphi_{2,2}^{\rho_h})(g)$  using \eqref{vm}, we obtain the normalized eigenvectors
		$v_h^{(1)}$ and $v_h^{(2)}$, respectively.
		
		For the eigenvalue $\ld_h^{(2)}$ of $M_h$, a corresponding eigenvector is
		$(-1,\bar\ell_h)^{T}$.	Proceeding as in the previous case, we find the normalized eigenvectors
		$v_h^{(3)}$ and $v_h^{(4)}$ of $\cay(D_n,S)$ with corresponding eigenvalue $\ld_h^{(2)}$.
	\end{proof}

	Recall that $\dim L(D_n)=|D_n|=2n$, and 
	$\bigl\{\varphi_{i,j}^\rho : \rho\in\widehat{D}_n,\,1\le i,j\le d_\rho\bigr\}$
	forms an orthogonal basis of $L(D_n)$. For each irreducible
	representation $\rho$ of $D_n$, let 
	$V_\rho := \operatorname{span}\{\varphi_{i,j}^\rho : 1\le i,j\le d_\rho\}.$
	Then
	\[
	L(D_n) \;=\; \bigoplus_{\rho\in\widehat{D}_n} V_\rho,
	\]
	and each $V_\rho$ is invariant under  $f_S$ for any $S\subseteq D_n$. Note that the eigenvalues and eigenvectors described in Theorems~\ref{ev1_dn} and \ref{ev2_dn} correspond to that of $f_S\in L(D_n)$. Thus if $n$ is odd, then
	\[\{v_1,v_2\}\ \cup\ \bigcup_{\substack{h=1\\ \eta_h(S_2)=0}}^{\left\lfloor\frac{n-1}{2}\right\rfloor}
	\left\{u_h^{(1)},u_h^{(2)},u_h^{(3)},u_h^{(4)}\right\}\ \cup\ \bigcup_{\substack{h=1\\ \eta_h(S_2)\neq 0}}^{\left\lfloor\frac{n-1}{2}\right\rfloor}
	\left\{v_h^{(1)},v_h^{(2)},v_h^{(3)},v_h^{(4)}\right\}\]
	is an orthonormal basis of $\Cl^{2n}$. Similarly, if $n$ is even,  then 
	\[\{v_1,v_2,v_3,v_4\}\ \cup\ \bigcup_{\substack{h=1\\ \eta_h(S_2)=0}}^{\left\lfloor\frac{n-1}{2}\right\rfloor}
	\left\{u_h^{(1)},u_h^{(2)},u_h^{(3)},u_h^{(4)}\right\}\ \cup\ \bigcup_{\substack{h=1\\ \eta_h(S_2)\neq 0}}^{\left\lfloor\frac{n-1}{2}\right\rfloor}
	\left\{v_h^{(1)},v_h^{(2)},v_h^{(3)},v_h^{(4)}\right\}\]
	is an orthonormal basis of $\Cl^{2n}$.

	\section{\boldmath Perfect state transfer on $\cay(D_n,S)$}
	Let $\Gamma$ be a finite group. For each $g \in \Gamma$, define two permutations of $\Gamma$, namely the left multiplication $\mathcal{L}_g$ and the right multiplication $\mathcal{R}_g$, by $\mathcal{L}_g(h) = gh$ and $\mathcal{R}_g(h) = hg$, respectively, for all $h \in \Gamma$. For a permutation $\sigma$ of $\Gamma$, the corresponding permutation matrix in
	$\mathbb{C}^{\Gamma \times \Gamma}$ has $uv$-th entry $ \delta_{u,\,\sigma(v)}.$
	In what follows, we use the same symbol $\sigma$ to denote both the permutation
	and its associated permutation matrix, depending on the context. Let $Z(\Gamma):=\{g\in\Gamma: gh=hg~\text{for each } h\in\Gamma\}$ be the center of the group $\Gamma$. For a graph $G$, let $\aut(G)$ denote the group of
	all automorphisms of $G$.
	\begin{lemma}\label{pst_cayley}
		Let $G:=\cay(\Gamma,S)$ be a Cayley graph over a finite group $\Gamma$ with connection set $S$. Then $G$ exhibits PST at time $\tau$ if and only if $T_\tau(P)$ is a permutation matrix of order $2$ with no fixed points. Moreover, if $G$ is normal, then $T_\tau(P)=\mathcal{R}_z$ for some $z\in Z(\Gamma)$ of order $2$.
	\end{lemma}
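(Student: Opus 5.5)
We need to prove the following: $G$ exhibits PST at time $\tau$ if and only if $T_\tau(P)$ is a permutation matrix of order $2$ with no fixed points; and if $G$ is normal, then $T_\tau(P)=\mathcal{R}_z$ for some central involution $z$. Here "$G$ exhibits PST at time $\tau$" is understood as PST occurring from some vertex $u$ to some vertex $v\neq u$ at time $\tau$. The plan is to combine Lemma~\ref{defpst} with the vertex-transitivity of Cayley graphs, working throughout with the permutation matrices $\mathcal{L}_g$ and $\mathcal{R}_g$.

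\textbf{Step 1: $T_\tau(P)$ commutes with right translations.} The graph $G=\cay(\Gamma,S)$ is $|S|$-regular, so Lemma~\ref{reg} gives $P=\frac{1}{|S|}A$. Adjacency is defined by $uv^{-1}\in S$, which is invariant under $u\mapsto ug$, $v\mapsto vg$. Hence each $\mathcal{R}_g$ is an automorphism of $G$ and commutes with $A$. It then commutes with $P$ and with every polynomial in $P$, in particular with $T_\tau(P)$.

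\textbf{Step 2: the "only if" direction.} Suppose PST occurs from $u$ to $v\neq u$ at time $\tau$. By Lemma~\ref{defpst}, $T_\tau(P)\eu=\ev$. For any $w\in\Gamma$, put $g=u^{-1}w$, so that $\mathcal{R}_g\eu=\mathbf{e}_{ug}=\mathbf{e}_w$. Then
\[
T_\tau(P)\mathbf{e}_w=\mathcal{R}_gT_\tau(P)\eu=\mathbf{e}_{vu^{-1}w}.
\]
So $T_\tau(P)$ is the permutation matrix of $w\mapsto cw$ with $c=vu^{-1}\neq 1$, and this permutation has no fixed points. Symmetry of $T_\tau(P)$ (noted after Lemma~\ref{defpst}) gives $T_\tau(P)=T_\tau(P)^t=T_\tau(P)^{-1}$, so the permutation has order $2$. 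Equivalently, $c^2=1$.

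\textbf{Step 3: the "if" direction.} If $T_\tau(P)$ is a fixed-point-free permutation matrix, choose any $u$ and let $v$ be its image. Then $v\neq u$ and $T_\tau(P)\eu=\ev$, so Lemma~\ref{defpst} yields PST from $u$ to $v$.

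\textbf{Step 4: the normal case.} Now assume $S$ is a union of conjugacy classes. Then $uv^{-1}\in S$ if and only if $g uv^{-1} g^{-1}\in S$. Since $(gu)(gv)^{-1}=guv^{-1}g^{-1}$, this shows that each $\mathcal{L}_g$ is also an automorphism. By the argument of Step 1, $T_\tau(P)$ commutes with every $\mathcal{L}_g$.

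From Step 2 we have $T_\tau(P)=\mathcal{L}_c$, the map $w\mapsto cw$. Commuting with $\mathcal{L}_g$ means $gcw=cgw$ for all $g,w$, so $c\in Z(\Gamma)$. Centrality gives $cw=wc$, hence $\mathcal{L}_c=\mathcal{R}_c$. Setting $z=c$, we have $z\neq 1$ and $z^2=1$, so $z$ has order $2$.

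\textbf{Main obstacle.} The only delicate point is bookkeeping with conventions: the permutation-matrix convention ($uv$-entry $\delta_{u,\sigma(v)}$) and the left/right orientation fixed by the adjacency rule $uv^{-1}\in S$. I would verify explicitly that with $E(G)$ defined by $uv^{-1}\in S$ it is right multiplication that preserves adjacency, and that in general $T_\tau(P)$ is a left translation. This is exactly why the conclusion $T_\tau(P)=\mathcal{R}_z$ is stated only in the normal case, where centrality makes left and right translation by $z$ coincide.
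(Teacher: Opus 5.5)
Your proof is correct and takes essentially the same approach as the paper. $T_\tau(P)$ is a polynomial in $A$, so it commutes with the translation automorphisms; hence PST from one vertex propagates to a fixed-point-free involutive translation, and in the normal case commuting with both translation families forces $z\in Z(\Gamma)$. The only difference is the order in which you use the two families: you first use the $\mathcal{R}_g$ to get $T_\tau(P)=\mathcal{L}_{vu^{-1}}$ (which makes the paper's terse first part explicit) and then the $\mathcal{L}_g$ for centrality, whereas the paper uses the $\mathcal{L}_g$ to get $T_\tau(P)=\mathcal{R}_z$ and then the $\mathcal{R}_g$, via $T_\tau(P)\in Z(\aut(G))$, to get $z\in Z(\Gamma)$.
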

	\begin{proof}
		Suppose $\cay(\Gamma,S)$ exhibits PST between the vertices $u$ and $v$ at time $\tau$. By Lemma~\ref{defpst},  we have $ T_{\tau}(P)\eu=\ev$.  Using Lemma~\ref{reg}, $T_\tau(P)$ is a polynomial in $A$. Further, $\mathbf{e}^t_{xg}A^k\mathbf{e}_{yg} =\mathbf{e}^t_{x}A^k\mathbf{e}_{y}$ for any $x,y,g\in\Gamma$ and nonnegative integer $k$. Thus it follows that $\mathbf{e}^t_{xg}T_\tau(P)\mathbf{e}_{yg} =\mathbf{e}^t_{x}T_\tau(P)\mathbf{e}_{y}$. This,  together with the symmetricity of $T_\tau(P)$ and $T_{\tau}(P)\eu=\ev$, imply that $T_\tau(P)$ is a permutation matrix of order $2$ with no fixed points. The converse direction is immediate by Lemma~\ref{defpst}.
		
		Now assume that $G$ is a normal Cayley graph and regard $T_\tau(P)$ as a permutation of $\Gamma$. Since $G$ is normal, $\{\mathcal{L}_g,\mathcal{R}_g:g\in\Gamma\}\subseteq\aut(G)$ . 
		Further, $A\sigma = \sigma A$ for every $\sigma \in \aut(G)$ and $T_\tau(P)$ is a polynomial in $A$.  Therefore $T_\tau(P) \in Z(\aut(G)).$
		Hence $T_\tau(P)\mathcal{L}_g = \mathcal{L}_g T_\tau(P)$ for all  $g\in \Gamma.$
		Thus for all $g,h \in \Gamma$, $T_\tau(P)(gh) = gT_\tau(P)(h).$ Let $z := T_\tau(P)(1)$.  
		Setting $h=1$ in the previous identity yields $T_\tau(P)(g) = gz$ for all $g \in \Gamma$. 
		Therefore, $T_\tau(P) = \mathcal{R}_z.$ Since $T_\tau(P) \in Z(\aut(G))$, we must have $\mathcal{R}_z \mathcal{R}_g = \mathcal{R}_g \mathcal{R}_z$ for all $g \in \Gamma,$
		which holds precisely when $z \in Z(\Gamma)$.  
		Finally, since $T_\tau(P)$ is a permutation matrix of order $2$ with no fixed points, $z$ must be an element of order~$2$. 
	\end{proof}
	The following result provides a relation  between the occurrence of PST and the periodicity of a graph.
	\begin{corollary}\label{minimum}
		Let $G$ be a Cayley graph. If $G$ exhibits PST at time $\tau$, then $T_{\tau}(\mu)\in\{-1,1\}$ for every eigenvalue $\mu$ of $P$. Moreover, if $\tau$ is the minimum time for PST, then $G$ is $2\tau$-periodic.
	\end{corollary}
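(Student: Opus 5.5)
By Lemma~\ref{pst_cayley}, PST at time $\tau$ means that $Q:=T_\tau(P)$ is a permutation matrix of order $2$ with no fixed points. The plan is to read off the spectrum of $Q$ from the spectral decomposition of $P$, and then turn it into information about $U$ via Theorem~\ref{ev_grover}.

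\textbf{First claim.} Since $G$ is a Cayley graph, it is regular, so $P$ is real symmetric by Lemma~\ref{reg}. Write $P=\sum_{\mu}\mu E_\mu$ with orthogonal idempotents $E_\mu$. Then $Q=\sum_\mu T_\tau(\mu)E_\mu$, so the eigenvalues of $Q$ are exactly the numbers $T_\tau(\mu)$ as $\mu$ ranges over the eigenvalues of $P$. On the other hand, $Q$ is a permutation matrix with $Q^2=I$, so each of its eigenvalues is $\pm1$. Hence $T_\tau(\mu)\in\{-1,1\}$ for every eigenvalue $\mu$ of $P$.

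\textbf{Second claim.} Let $\tau$ be the minimum PST time.

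\emph{The eigenvalues $e^{\pm\iu\theta}$ with $\theta=\arccos\mu$.} Since $T_\tau(\cos\theta)=\cos\tau\theta=\pm1$, we get $e^{\iu\tau\theta}=\pm1$, and therefore $e^{2\iu\tau\theta}=1$.

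\emph{The remaining eigenvalues $\pm1$.} These satisfy $(\pm1)^{2\tau}=1$.

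By Lemma~\ref{periodic}, $G$ is therefore periodic with period dividing $2\tau$; equivalently, $T_{2\tau}(P)=2Q^2-I=I$.

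It remains to show the period $p$ is exactly $2\tau$. Suppose $U^p=I$.

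\emph{Case $p<\tau$.} Then $U^{\tau-p}=U^\tau$ up to multiplication by $U^{-p}=I$, so $T_{\tau-p}(P)=T_\tau(P)$. Hence PST already occurs at time $\tau-p<\tau$, contradicting minimality. This uses Lemma~\ref{defpst} and the identity $T_m(P)=NU^mN^*$-type correspondence: the vertex-block of $U^m$ determines $T_m(P)$.

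\emph{Case $p=\tau$.} Then $U^\tau=I$ gives $T_\tau(P)=I$, which has fixed points. This is impossible.

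\emph{Case $\tau<p<2\tau$.} Since $p$ divides $2\tau$, this case cannot occur.

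\emph{Cleaner route.} It is simpler to avoid $U$ and argue directly on $P$: $p$ is the least $m\ge1$ with $T_m(\mu)=1$ for all eigenvalues $\mu$ of $P$ (together with the $\pm1$ eigenvalues of $U$). Because $\tau$ is minimal for PST, no $m<\tau$ has $T_m(P)=I$ (otherwise $T_{\tau-m}(P)=T_\tau(P)$ would give earlier PST). Also $T_\tau(P)\neq I$. Hence $p\nmid\tau$, and since $p\mid 2\tau$, the period $p$ equals $2\tau$.

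\textbf{Main obstacle.} The delicate point is justifying $T_{\tau-p}(P)=T_\tau(P)$ and accounting for the $\pm1$ eigenvalues of $U$ that do not come from $P$. The cleanest fix is to use $U^m$-periodicity of each eigenvalue $e^{\iu m\theta}$, which gives $T_{m+k}(\mu)=T_k(\mu)$, together with the identity $T_m(P)=N U^m N^*$ from \cite{kubota2}. Alternatively, the case $p<\tau$ can be handled by noting $p\mid 2\tau$ and $p\nmid\tau$, which forces $p=2\tau/\text{odd}$; minimality then excludes odd divisors greater than $1$, since $T_{p/2}(P)=T_\tau(P)$ in that situation.
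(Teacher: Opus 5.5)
Your proof is correct and follows the paper's route. Lemma~\ref{pst_cayley} makes $T_\tau(P)$ an involutive permutation, so $T_\tau(\mu)=\pm1$; hence every eigenvalue of $U$ in Theorem~\ref{ev_grover} satisfies $\eta^{2\tau}=1$, the period $k$ divides $2\tau$, and minimality forces $k>\tau$, so $k=2\tau$. Your case analysis ($p<\tau$ gives earlier PST via $T_{\tau-p}(P)=T_\tau(P)$; $p=\tau$ gives $T_\tau(P)=I$, which has fixed points) justifies the step $\tau<k$ that the paper only asserts.

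One small slip is in your ``cleaner route''. There, ``$p\nmid\tau$ and $p\mid 2\tau$ give $p=2\tau$'' does not hold on its own (e.g.\ $\tau=3$, $p=2$); it also needs $p\ge\tau$. You do get that bound from your earlier remark that no $m<\tau$ has $T_m(P)=I$.
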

	\begin{proof}
		Let $G$ exhibit PST at time $\tau$ and let $\mu$ be an eigenvalue of $P$. Then by Lemma~\ref{pst_cayley}, $T_\tau(\mu)^2=1$, and so $T_{\tau}(\mu)\in\{-1,1\}$.
		
		Suppose $\tau$ is the minimum time for exhibiting PST on $G$. Using properties of the Chebyshev polynomial of the first kind, it follows that $\mu=\cos \frac{m}{\tau}\pi$ for some positive integer $m$. Consequently,  Theorem~\ref{ev_grover} gives that $U^{2\tau}=I$.  Thus $G$ is periodic. Suppose that $G$ is $k$-periodic, that is, $k$ is the least positive integer such that $U^k=I$. This implies that $k\divides2\tau$. Since $\tau$ is the minimum time at which PST occurs in $G$, we have $\tau\in\{1,\ldots,k-1\}$. Hence $k=2\tau$. 
	\end{proof}
	Depending on the parity of $n$ and whether $S$ is normal or non-normal, we analyze PST on $\cay(D_n,S)$ in the following four cases.
	\begin{enumerate}
		\item[(i)] $n$ is even and $S$ is non-normal (Theorem~\ref{pst_dn_1}),
		\item[(ii)] $n$ is even and $S$ is normal (Theorem~\ref{pst_dn_2}),
		\item[(iii)] $n$ is odd and $S$ is non-normal (Theorem~\ref{pst_dn_3}), and
		\item[(iv)] $n$ is odd and $S$ is normal (Theorem~\ref{pst_dn_4}).
	\end{enumerate}
	First assume that $n$ is even and write $n=2m$ for some positive integer $m$. By Lemma~\ref{reg},  the discriminant $P$ of $\cay(D_n,S)$ is given by
	$ P=\frac{1}{d}A$, where $d=|S|$.  
	For the one-dimensional representations of $D_n$, the eigenvalues of $P$ are given by
	\begin{equation}\label{ev_p_dn1}
		\widetilde{\mu}_i:=\frac{\widetilde{\lambda}_i}{d}\quad \text{for}\quad i\in\{1,2,3,4\},
	\end{equation} 
	with corresponding orthonormal eigenvectors $v_i$ given in Theorem~\ref{ev1_dn}.  
	For each two-dimensional representation of $D_n$, if $\eta_h(S_2)=0$, then the eigenvalue 
	\begin{equation}\label{ev_p_dn2}
		\mu_h:=\frac{\lambda_h}{d}
	\end{equation} 
	of $P$ has multiplicity $4$ with orthonormal eigenvectors 
	$u_h^{(1)},~u_h^{(2)},~u_h^{(3)}$ and $u_h^{(4)}$ given in Theorem~\ref{ev2_dn}.  
	If $\eta_h(S_2)\neq 0$, then $P$ has two eigenvalues 
	\begin{equation}\label{ev_p_dn3}
		\mu_h^{(1)}:=\frac{\lambda_h^{(1)}}{d} \quad\text{and}\quad \mu_h^{(2)}:=\frac{\lambda_h^{(2)}}{d},
	\end{equation}
	each having multiplicity $2$. The eigenvectors 
	$v_h^{(1)}$ and $v_h^{(2)}$ correspond to $\mu_h^{(1)}$; and 
	$v_h^{(3)}$ and $v_h^{(4)}$ correspond to $\mu_h^{(2)}$. Thus the spectral decomposition of the discriminant matrix $P$ of $\cay(D_n,S)$ is
	\begin{equation}\label{spec_P}
		\begin{aligned}
			P
			&= \sum_{i=1}^{4} \widetilde{\mu}_i\, v_i v_i^{*} 
			+ \sum_{\substack{h=1\\ \eta_h(S_2)=0}} ^{m-1}
			\mu_h \sum_{j=1}^{4} u_h^{(j)} \bigl(u_h^{(j)}\bigr)^{*} \\
			&\quad+ \sum_{\substack{h=1\\ \eta_h(S_2)\neq 0}}^{m-1}
			\biggl(
			\mu_h^{(1)} \sum_{j=1}^{2} v_h^{(j)} \bigl(v_h^{(j)}\bigr)^{*}
			\;+\;
			\mu_h^{(2)} \sum_{j=3}^{4} v_h^{(j)} \bigl(v_h^{(j)}\bigr)^{*}
			\biggr).
		\end{aligned}
	\end{equation}
	Define  $E_i = v_i v_i^{\ast}$ for $i\in\{1,2,3,4\}$. Also, for $1 \leq j \leq 4$ and $1 \leq h \leq m-1$, set
	\[
	E_h^{(j)} := u_h^{(j)}\bigl(u_h^{(j)}\bigr)^{\ast} ~~\text{if } \eta_h(S_2)=0~~\text{and}~~
	F_h^{(j)} := v_h^{(j)}\bigl(v_h^{(j)}\bigr)^{\ast} ~~ \text{if } \eta_h(S_2)\neq 0.
	\]
	Then all these matrices are idempotent and mutually orthogonal. Therefore by \eqref{spec_P}, for any positive integer $k$, we have 
	\begin{equation}\label{sd_tp_dn}
		\begin{aligned}
			T_k(P) &= \sum_{i=1}^{4} T_k(\widetilde{\mu}_i) E_i 
			+ \sum_{\substack{h=1\\ \eta_h(S_2)=0}} ^{m-1} T_k(\mu_h) \sum_{j=1}^{4} E_h^{(j)}\\
			&\quad+ \sum_{\substack{h=1\\ \eta_h(S_2)\neq 0}} ^{m-1} \left( 
			T_k(\mu_h^{(1)}) (F_h^{(1)}+F_h^{(2)}) 
			+ T_k(\mu_h^{(2)}) (F_h^{(3)}+F_h^{(4)}) 
			\right).
		\end{aligned}
	\end{equation}
	Now we compute the matrices $E_i,~E_h^{(j)} $
	and $F_h^{(j)}$. We find that 
	\begin{equation}\label{esp_1}
		\begin{aligned}
			E_1 &= \frac{1}{2n}J_{2n}, 
			&\qquad 
			E_2 &= \frac{1}{2n}\begin{pmatrix} J_n & -J_n \\ -J_n & J_n \end{pmatrix}, \\
			E_3 &= \frac{1}{2n}\Big( \zeta_1(u,v) \Big)_{u,v=0}^{2n-1}, 
			&\qquad 
			E_4 &= \frac{1}{2n}\Big( \zeta_2(u,v) \Big)_{u,v=0}^{2n-1},
		\end{aligned}
	\end{equation}
	where 
	\[\zeta_1(u,v)=\left(-1\right)^{u+v}, \qquad 0\leq u,v\leq 2n-1,\]
	and
	\[\zeta_2(u,v)=\begin{cases}
		\left(-1\right)^{u+v}, & 0\leq u,v\leq n-1\text{ or } n\leq u,v\leq 2n-1,\\
		\left(-1\right)^{u+v+1}, & \text{otherwise}.
	\end{cases}\]
	For $1\leq h\leq m-1$ with  $\eta_h(S_2)=0$ and $1\leq j\leq 4$, the matrices $E_h^{(j)}$ are given by
	\begin{equation}\label{esp_2}
		\begin{aligned}
			E_h^{(1)} &= \frac{1}{n}\begin{pmatrix}{\Omega}_h & 0\\ 0 & 0 \end{pmatrix}, 
			&\qquad 
			E_h^{(2)} &= \frac{1}{n}\begin{pmatrix} 0 & 0\\ 0 &  {\Omega}_h \end{pmatrix}, \\
			E_h^{(3)} &= \frac{1}{n}\begin{pmatrix} 0 & 0\\ 0 & \overline{\Omega}_h \end{pmatrix}\text{ and} 
			&\qquad 
			E_h^{(4)} &= \frac{1}{n}\begin{pmatrix} \overline{{\Omega}}_h & 0\\ 0 & 0 \end{pmatrix},
		\end{aligned}
	\end{equation}
	where $(\Omega_h)_{u,v}=\omega_n^{(u-v)h}$ for $0\leq u,v\leq n-1$, that is,
	\[
	\Omega_h=
	\begin{pmatrix}
		1 &\omega_n^{-h}&\cdots & \omega_n^{-(n-1)h}\\
		\omega_n^{h} & 1 &\cdots & \omega_n^{-(n-2)h}\\
		\vdots&\vdots&\ddots&\vdots\\
		\omega_n^{(n-1)h} &\omega_n^{(n-2)h}&\cdots & 1
	\end{pmatrix}.
	\]
	For $1\leq h\leq m-1$ with  $\eta_h(S_2)\neq0$ and $1\leq j\leq 4$, the matrices $F_h^{(j)}$ are given by
	\begin{equation}\label{esp_3}
		\begin{aligned}
			F_h^{(1)} &= \frac{1}{2n}\begin{pmatrix}{\Omega}_h & \bar{\ell}_h\Omega_h \\ \ell_h {\Omega}_h& \Omega_h \end{pmatrix}, 
			&\qquad 
			F_h^{(2)} &= \frac{1}{2n}\begin{pmatrix} \overline{\Omega}_h & \ell_h\overline{\Omega}_h \\ \bar{\ell}_h\overline{\Omega}_h &  \overline{\Omega}_h \end{pmatrix}, \\
			F_h^{(3)} &= \frac{1}{2n}\begin{pmatrix} {\Omega}_h & -\bar{\ell}_h\Omega_h\\ -\ell_h{\Omega}_h & \Omega_h \end{pmatrix}\text{ and}
			&\qquad 
			F_h^{(4)} &= \frac{1}{2n}\begin{pmatrix} \overline{\Omega}_h & -{\ell}_h\overline{\Omega}_h\\ -\bar\ell_h\overline{\Omega}_h & \overline{\Omega}_h \end{pmatrix}.
		\end{aligned}
	\end{equation}
	
	We label the elements of the dihedral group $D_n := \langle a,b \mid a^n = b^2 = 1,\; bab = a^{-1} \rangle$
	as follows. For an integer $u$, if $0 \le u \le n-1$, then the label $u$ denotes the vertex corresponding to $a^u$ in the Cayley graph $\cay(D_n,S)$. Similarly, if $n \le u \le 2n-1$, then $u$ represents the vertex associated with $ba^{u-n}$. Accordingly, we write $G_0 := \{0,\dots,n-1\}$ and	$G_1 := \{n,\dots,2n-1\}.$ 		Recall that $T_n(x)$ is the Chebyshev polynomial of the first kind, and 	$\widetilde{\mu}_i$, $\mu_h$, and $\mu_h^{(i)}$ are eigenvalue of $P$ as defined in 
	\eqref{ev_p_dn1}--\eqref{ev_p_dn3}. For $C\subset\Zl$ and $z\in \Zl$, define $z-C:=\{z-c:c\in C\}$.
	\begin{theorem}\label{pst_dn_1}
		Let $n=2m$ and $D_n$ be the dihedral group. Then the non-normal Cayley graph $\mathrm{Cay}(D_n,S)$ exhibits PST
		between the vertices $u$ and $v$  at time $\tau$ if and only if exactly one of the following holds. 
		\begin{description}		
			\item[(A)]
			$(u,v)\in (G_0\times G_0)\cup(G_1\times G_1)$ with $u-v=\pm m$, and the eigenvalues 
			of the discriminant matrix $P$ satisfy
			\begin{enumerate}
				\item[(i)] $T_\tau(\widetilde{\mu}_1)=T_\tau(\widetilde{\mu}_2)=1$,
				$T_\tau(\widetilde{\mu}_3)=T_\tau(\widetilde{\mu}_4)=(-1)^m$,
				\item[(ii)]  $T_\tau(\mu_h)=(-1)^h$ for $1\le h\le m-1$ with  $\eta_h(S_2)=0$, and
				\item[(iii)] $T_\tau(\mu_h^{(1)})=T_\tau(\mu_h^{(2)})=(-1)^h$ for $1\le h\le m-1$ with   $\eta_h(S_2)\neq 0$.
			\end{enumerate}
			\item[(B)]
			$(u,v)\in (G_0\times G_1)\cup(G_1\times G_0)$ with  $\eta_h(S_2)\neq 0$ for each $h$ and $S_2= a^{2|v-u|}S_2^{-1}$,
			and the eigenvalues of the discriminant matrix $P$ satisfy
			\begin{enumerate}
				\item[(i)]  $T_\tau(\widetilde{\mu}_1)=-T_\tau(\widetilde{\mu}_2)=1$,
				$T_\tau(\widetilde{\mu}_3)=-T_\tau(\widetilde{\mu}_4)=(-1)^{u+v}$, and 
				\item[(ii)] $T_\tau(\mu_h^{(1)})=-T_\tau(\mu_h^{(2)})=\ell_h\omega_n^{|v-u|h}$ for $1\le h\le m-1$.
			\end{enumerate}
		\end{description}		
	\end{theorem}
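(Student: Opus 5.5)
The plan is to work directly with the spectral decomposition \eqref{sd_tp_dn} of $T_\tau(P)$ and the explicit idempotents \eqref{esp_1}--\eqref{esp_3}. By Lemma~\ref{defpst} and the remark following it, PST between $u$ and $v$ at time $\tau$ is equivalent to $T_\tau(P)\eu=\ev$. Since $\{v_i\}\cup\{u_h^{(j)}\}\cup\{v_h^{(j)}\}$ is an orthonormal eigenbasis, this in turn is equivalent to
\[
T_\tau(\theta)\,w^{*}\eu = w^{*}\ev
\]
for every basis eigenvector $w$ with eigenvalue $\theta$. In other words, we compare coordinates $u$ and $v$ of each eigenvector. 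Every eigenvector has entries of modulus $1/\sqrt{2n}$ or $1/\sqrt{n}$ (on one half), so these conditions are scalar identities that can be read off directly. Since $S$ is non-normal, Lemma~\ref{conjugacy} guarantees some $h$ with $\eta_h(S_2)\neq0$. I split the argument according to whether $u,v$ lie in the same coset $G_i$ or in different cosets.

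\emph{Case (A): $u,v$ in the same $G_i$.} Take, say, $u,v\in G_0$. For any $h$ with $\eta_h(S_2)\neq 0$, the vectors $v_h^{(1)}$ and $v_h^{(3)}$ have entries $\bar\ell_h\omega_n^{kh}$ and $-\omega_n^{kh}$ on $G_0$. The conditions then become
\[
T_\tau(\mu_h^{(1)})=\omega_n^{h(u-v)}=T_\tau(\mu_h^{(2)}),
\]
up to conjugation. Corollary~\ref{minimum} forces these values to be real and equal to $\pm1$.

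\begin{itemize}
\item From $h=1$ (or any nonvanishing $h$), together with the one-dimensional vectors $v_1,v_2$ (which give $T_\tau(\widetilde\mu_1)=T_\tau(\widetilde\mu_2)=1$), I need $\omega_n^{h(u-v)}\in\{\pm1\}$ for the relevant $h$.
\item The cleanest route is to use Lemma~\ref{pst_cayley}. The permutation $T_\tau(P)$ commutes with $A$ and hence with every $\mathcal{L}_g$, because $\mathcal{L}_g\in\aut(G)$ for any Cayley graph. So $T_\tau(P)=\mathcal{R}_z$ for some $z$ of order $2$, exactly as in that proof, but without needing $z$ central.
\item Then $v=uz$. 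When $u,v$ are in the same coset, $z\in\langle a\rangle$ has order $2$, so $z=a^m$ and $u-v=\pm m$.
\item Substituting $\omega_n^{hm}=(-1)^h$ and $(-1)^m$ into the remaining coordinate identities yields (i)--(iii). Conversely, these conditions make every coordinate identity hold, using that the $G_1$ halves behave identically.
\end{itemize}

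\emph{Case (B): different cosets.} Here $z=ba^{r}$, and $v=uz$ gives $|v-u|$ related to $r$ in the labelling, which is the role of $|v-u|$ in the statement. For $h$ with $\eta_h(S_2)=0$, the vectors $u_h^{(j)}$ are supported on one coset only. The identity $T_\tau(\mu_h)\,w^{*}\eu=w^{*}\ev$ with one side zero and the other nonzero is impossible. Hence $\eta_h(S_2)\neq0$ for all $h$.

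For the remaining $h$, comparing entries of $v_h^{(1)}$ at $u\in G_0$ and $v\in G_1$ gives $T_\tau(\mu_h^{(1)})=\ell_h\omega_n^{h|v-u|}$, up to the sign conventions. Likewise $v_h^{(3)}$ gives $T_\tau(\mu_h^{(2)})=-\ell_h\omega_n^{h|v-u|}$, and $v_h^{(2)},v_h^{(4)}$ yield the conjugate relations. Since the left-hand sides lie in $\{\pm1\}$, we get $\ell_h^2\omega_n^{2h|v-u|}=1$, that is,
\[
\eta_h(S_2^{-1})\,\omega_n^{2h|v-u|}=\eta_h(S_2).
\]
By Fourier inversion on $\Zl_n$, applied to all $h$ and including the one-dimensional data, this is equivalent to $S_2=a^{2|v-u|}S_2^{-1}$. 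An alternative derivation: the automorphism condition that $\mathcal{R}_{ba^r}$ preserves adjacency gives the same relation directly.

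The one-dimensional characters give (B)(i): $v_2$ changes sign across cosets, and $v_3,v_4$ contribute the parity $(-1)^{u+v}$. The converse again follows by reassembling the coordinate identities in \eqref{sd_tp_dn}.

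The main obstacle is the bookkeeping in Case (B). The sign of $v-u$ and the branch of $\ell_h$ must be reconciled with the labelling $ba^{k}\leftrightarrow n+k$. This is needed to show that the single condition (B)(ii) encodes all four eigenvector identities, and that $S_2=a^{2|v-u|}S_2^{-1}$ is both necessary and sufficient for them to be consistent. I would handle this first by fixing $u=0$ via the left-translation invariance and computing $\mathcal{R}_z$ explicitly, then verifying each eigenvector relation.
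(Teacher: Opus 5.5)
Your main mechanism is sound, and it differs only mildly from the paper's. The paper evaluates the single entry $T_\tau(P)_{u,v}$ from \eqref{sd_tp_dn} and notes that the moduli of its terms add up to exactly $1$. It then uses the equality case of the triangle inequality to obtain the systems \eqref{pst_dn_eq3} and \eqref{pst_dn_eq5}. You instead test $T_\tau(P)\mathbf{e}_u=\mathbf{e}_v$ against every vector of the orthonormal eigenbasis, which yields the same identities. For instance, for $u\in G_0$, $v\in G_1$, the vectors $v_h^{(1)},v_h^{(2)}$ give $T_\tau(\mu_h^{(1)})=\ell_h\omega_n^{(v-u)h}=\bar\ell_h\omega_n^{-(v-u)h}$, and a $u_h^{(j)}$ supported on the coset of $v$ rules out $\eta_h(S_2)=0$. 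Your route gives these identities directly as an equivalence, so the converse is immediate and no norm bookkeeping is needed. The step from $\ell_h^2=\omega_n^{-2(v-u)h}$ to $S_2=a^{2(v-u)}S_2^{-1}$ is the same Fourier inversion as in the paper. The identity for $1\le h\le m-1$ extends to all $h\in\Zl_n$ by complex conjugation and holds trivially at $h=0,m$, so no one-dimensional data is needed there. Exclusivity of (A) and (B) is automatic, since they concern disjoint sets of pairs.

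One step is wrong as written, though it is not load-bearing. With the paper's convention $u\sim v\iff uv^{-1}\in S$, the right multiplications $\mathcal{R}_g$ are automorphisms of every Cayley graph. By contrast, $\mathcal{L}_g\in\aut(G)$ for all $g$ only when $S$ is normal, which is exactly why Lemma~\ref{pst_cayley} restricts $T_\tau(P)=\mathcal{R}_z$ to normal graphs. Commuting with all $\mathcal{R}_g$ gives instead $T_\tau(P)=\mathcal{L}_z$ with $z=T_\tau(P)(1)$ an involution, so $v=zu$, not $uz$.

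In Case (A) this is harmless, since $z=a^m$ is central. You do not need the detour anyway: the $h=1$ eigenvectors give $T_\tau(\theta)=\omega_n^{\pm(v-u)}$ with $\theta$ real. Hence $\omega_n^{v-u}=\pm1$, and $u\neq v$ forces $u-v=\pm m$.

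In Case (B) the side matters. For $z=ba^r$, $v=zu$ gives $|v-u|\equiv r\pmod n$ whichever coset $u$ lies in. But $v=uz$ with $u=a^k$ gives $v-u\equiv r-2k$, so the bookkeeping you postpone would not reproduce the statement. Your ``alternative derivation'' is likewise vacuous as stated, because $\mathcal{R}_{ba^r}$ preserves adjacency for every $S$. The correct version is $\mathcal{L}_{ba^r}\in\aut(G)$, i.e.\ $ba^rSba^r=S$, which is precisely $S_2=a^{2r}S_2^{-1}$.

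Since your actual Case (B) derivation runs through the eigenvector coordinates, correcting the convention (or dropping the detour) leaves the proof intact.
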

	\begin{proof}
		\noindent\textbf{(A)} Here $(u,v)\in (G_0\times G_0)\cup(G_1\times G_1)$. Using \eqref{esp_1}--\eqref{esp_3} in \eqref{sd_tp_dn}, the $(u,v)$-entry of $T_\tau(P)$ is given by
		\begin{equation}\label{pst_dn_eq}
			\begin{aligned}
				T_\tau(P)_{u,v}&=\frac{1}{2n}\left(T_\tau(\widetilde{\mu}_1)+T_\tau(\widetilde{\mu}_2)+(-1)^{u+v}\left(T_\tau(\widetilde{\mu}_3)+T_\tau(\widetilde{\mu}_4)\right)\right)\\
				&\quad +\frac{1}{n}\sum_{\substack{h=1\\ \eta_h(S_2)=0}} ^{m-1}\left(\omega_n^{(v-u)h}+\omega_n^{(u-v)h}\right)T_\tau(\mu_h)\\
				&\quad +\frac{1}{2n}\sum_{\substack{h=1\\ \eta_h(S_2)\neq 0}} ^{m-1}\left(\omega_n^{(u-v)h}+\omega_n^{(v-u)h}\right)T_\tau(\mu_h^{(1)})\\
				&\quad +\frac{1}{2n}\sum_{\substack{h=1\\ \eta_h(S_2)\neq 0}} ^{m-1}\left(\omega_n^{(u-v)h}+\omega_n^{(v-u)h}\right)T_\tau(\mu_h^{(2)}).
			\end{aligned}
		\end{equation}
		Applying the triangle inequality to \eqref{pst_dn_eq} and using Corollary~\ref{minimum}, $|T_\tau(P)_{u,v}|\leq 1$, and equality occurs only when all summands have the same argument. Thus $|T_\tau(P)_{u,v}|=1$ if and only if
		\begin{equation}\label{pst_dn_eq3}
			\begin{aligned}	
				T_\tau(\widetilde{\mu}_1)&=T_\tau(\widetilde{\mu}_2)\\
				&=(-1)^{u+v}T_\tau(\widetilde{\mu}_3)=(-1)^{u+v}T_\tau(\widetilde{\mu}_4)\\
				&=\omega_n^{(u-v)h}T_\tau(\mu_h)=\omega_n^{(v-u)h}T_\tau(\mu_h)\quad\text{for $1\leq h\leq m-1$ with $\eta_h(S_2)=0$}\\
				&=\omega_n^{(u-v)h}T_\tau(\mu_h^{(1)})=\omega_n^{(v-u)h}T_\tau(\mu_h^{(1)})\quad\text{for $1\leq h\leq m-1$ with $\eta_h(S_2)\neq0$}\\
				&=\omega_n^{(u-v)h}T_\tau(\mu_h^{(2)})=\omega_n^{(v-u)h}T_\tau(\mu_h^{(2)})\quad\text{for $1\leq h\leq m-1$ with $\eta_h(S_2)\neq0$}.
			\end{aligned}
		\end{equation}
		
		Suppose $\mathrm{Cay}(D_n,S)$ exhibits PST between the vertices $u$ and $v$  at time $\tau$, that is $T_\tau(P)_{u,v}=1$. Hence \eqref{pst_dn_eq3} holds. Therefore $\omega_n^{(u-v)h}=\omega_n^{(v-u)h}$ for $1\leq h\leq m-1$. Since either $0 \le u,v \le n-1$ or $n \le u,v \le 2n-1$, it follows that $u-v=\pm m $. Note that $\widetilde{\mu}_1=1$. Thus $T_\tau(\widetilde{\mu}_1)=1$. Therefore by \eqref{pst_dn_eq3},  Conditions (i)--(iii) of (A) follow.

		Conversely, if (A) holds, then \eqref{pst_dn_eq3} is satisfied. Hence $|T_\tau(P)_{u,v}|=1$, and therefore PST occurs between $u$ and $v$ at time~$\tau$.

		\noindent\textbf{(B)} We consider two cases according as $(u,v)\in G_0\times G_1$ or $(u,v)\in G_1\times G_0$.
		
		\noindent\textbf{Case 1.}  First assume that $(u,v)\in G_0\times G_1$. Using \eqref{esp_1}--\eqref{esp_3} in \eqref{sd_tp_dn}, the $(u,v)$-entry of $T_\tau(P)$ is given by
		\begin{equation}\label{pst_dn_eq4}
			\begin{aligned}
				T_\tau(P)_{u,v}&=\frac{1}{2n}\left(T_\tau(\widetilde{\mu}_1)-T_\tau(\widetilde{\mu}_2)+(-1)^{u+v} T_\tau(\widetilde{\mu}_3)+(-1)^{u+v+1}T_\tau(\widetilde{\mu}_4)\right)\\
				&\quad +\frac{1}{2n}\sum_{\substack{h=1\\ \eta_h(S_2)\neq 0}} ^{m-1}\left(\bar\ell_h\omega_n^{(u-v)h}+\ell_h\omega_n^{(v-u)h}\right)T_\tau(\mu_h^{(1)})\\
				&\quad -\frac{1}{2n}\sum_{\substack{h=1\\ \eta_h(S_2)\neq 0}} ^{m-1}\left(\bar\ell_h\omega_n^{(u-v)h}+\ell_h\omega_n^{(v-u)h}\right)T_\tau(\mu_h^{(2)}).
			\end{aligned}
		\end{equation}
		If $\eta_h(S_2)=0$ for some $h$, then the triangle inequality implies that $|T_\tau(P)_{u,v}|<1$. Therefore $|T_\tau(P)_{u,v}|=1$ if and only if $\eta_h(S_2)\neq 0$ for $1\leq h\leq m-1$ and 
		\begin{equation}\label{pst_dn_eq5}
			\begin{aligned}	
				T_\tau(\widetilde{\mu}_1)&=-T_\tau(\widetilde{\mu}_2)\\
				&=(-1)^{u+v}T_\tau(\widetilde{\mu}_3)=(-1)^{u+v+1}T_\tau(\widetilde{\mu}_4)\\
				&=\bar\ell_h\omega_n^{(u-v)h}T_\tau(\mu_h^{(1)})=\ell_h\omega_n^{(v-u)h}T_\tau(\mu_h^{(1)})\\
				&=-\bar\ell_h\omega_n^{(u-v)h}T_\tau(\mu_h^{(2)})=-\ell_h\omega_n^{(v-u)h}T_\tau(\mu_h^{(2)}).
			\end{aligned}
		\end{equation}
		
		Suppose $\mathrm{Cay}(D_n,S)$ exhibits PST between the vertices $u$ and $v$  at time $\tau$. By \eqref{pst_dn_eq5}, we find $\bar\ell_h\omega_n^{(u-v)h}={\ell_h}\omega_n^{(v-u)h}$, and therefore $\ell_h^2=\omega_n^{2(u-v)h}$  for $1\leq h\leq m-1$. Hence
		we find
		\begin{equation*}
			{\eta_h(S_2^{-1})}=\omega_n^{2(u-v)h}\eta_h(S_2).
		\end{equation*}
		This implies that
		\[
		\sum_{s\in S_2^*}\omega_n^{-hs}
		=
		\omega_n^{2(u-v)h}\sum_{s\in S_2^*}\omega_n^{hs}.
		\]
		Multiplying both sides by $\omega_n^{(v-u)h}$ gives
		\[
		\sum_{s\in S_2^*}\omega_n^{h(v-u-s)}
		=
		\sum_{s\in S_2^*}\omega_n^{h(s+u-v)}.
		\]
		Define $E:=\{v-u-s : s\in S_2^*\}$ and $F:=\{s+u-v : s\in S_2^*\}$.
		Then
		\begin{equation}\label{ranjan}
			\sum_{a\in E}\omega_n^{ha}=\sum_{b\in F}\omega_n^{hb}
			\quad \text{for }1\leq h\leq m-1.
		\end{equation}
		Note that
		\[
		\frac{1}{n}\sum_{h=0}^{n-1} \omega_n^{h(x-y)} =
		\begin{cases}
			1 & \text{if } x=y,\\
			0 & \text{if } x\neq y.
		\end{cases}
		\]
		For any $x \in \mathbb{Z}_n$, we compute
		\[
		\frac{1}{n}\sum_{h=0}^{n-1}
		\left(\sum_{a\in E}\omega_n^{ha}\right)\omega_n^{-hx}
		=
		\frac{1}{n}\sum_{a\in E}\sum_{h=0}^{n-1} \omega_n^{h(a-x)}
		=
		\sum_{a\in E}
		\left(
		\frac{1}{n}\sum_{h=0}^{n-1} \omega_n^{h(a-x)}
		\right)
		= \mathbf{1}_E(x),
		\]
		where $\mathbf{1}_E$ denotes the indicator function of $E$. Using \eqref{ranjan}, we have
		\[
		\mathbf{1}_E(x)
		=
		\frac{1}{n}\sum_{h=0}^{n-1}
		\left(\sum_{b\in F}\omega_n^{hb}\right)\omega_n^{-hx}
		= \mathbf{1}_F(x).
		\]
		Hence $\mathbf{1}_E(x)=\mathbf{1}_F(x)$ for all $x \in \mathbb{Z}_n$, and therefore $E=F$.  Hence for every $s\in S_2^*$ there exists $t\in S_2^*$ such that  $t= 2(v-u)-s $ in $\Zl_n$. Hence $S_2^* = 2(v-u) - S_2^*$ in $\Zl_n$, that is, $S_2=a^{2(v-u)}S_2^{-1}$. Now by \eqref{pst_dn_eq5},  Conditions (i) and (ii) of (B) follow.

		Conversely, if (B) holds, then the condition $S_2=a^{2(v-u)}S_2^{-1}$, that is,  $S_2^* = 2(v-u)- S_2^*$ gives that 
		\[
		\eta_h(S_2)
		=\sum_{s\in S_2^*}\omega_n^{hs}
		=\sum_{s\in S_2^*}\omega_n^{h(2(v-u)-s)}.
		\]
		Therefore 
		\[
		\eta_h(S_2)
		=\frac{1}{2}\sum_{s\in S_2^*}\Bigl(\omega_n^{hs}+\omega_n^{h(2(v-u)-s)}\Bigr)=\omega_n^{h(v-u)}\kappa_h,
		\]
		where $$\kappa_h=\frac{1}{2}\sum_{s\in S_2^*}
		\left(\omega_n^{h(u-v+s)}+\omega_n^{h(v-u-s)}\right)\in \Rl.$$
		Thus $\ell_h^2
		={\overline{\eta_h(S_2)}}/{\eta_h(S_2)}
		=\omega_n^{2h(u-v)},$
		giving $\ell_h=\pm\omega_n^{h(u-v)}$. Consequently, $\ell_h\omega_n^{(v-u)h}=\pm1
		=\bar{\ell_h}\omega_n^{(u-v)h}$. Using Conditions (i) and (ii) of (B) together with \eqref{pst_dn_eq5}, we conclude that $T_\tau(P)_{u,v}=1$, that is,  PST occurs between $u$ and $v$ at time $\tau$.
		
		\noindent\textbf{Case 2.} Now assume that $(u,v)\in G_1\times G_0$. In this case, we interchange the roles of $u$ and $v$, and proceed as in Case 1.

		Note that Condition (i) of (A) and the Condition (i) of (B) cannot hold all together at the same time $\tau$. Therefore if PST occurs on $\cay(D_n,S)$ at time $\tau$, then exactly one of (A) and (B) holds.
	\end{proof}
	
	\begin{theorem}\label{pst_dn_2}
		Let $n=2m$ with $n\geq 4$ and $D_n$ be the dihedral group. Then the normal Cayley graph $\mathrm{Cay}(D_n,S)$ exhibits PST
		between the vertices $u$ and $v$ at time $\tau$ if and only if 
		$(u,v)\in (G_0\times G_0)\cup(G_1\times G_1)$ with $u-v=\pm m$, and the eigenvalues 
		of the discriminant matrix $P$ satisfy
		\begin{enumerate}
			\item[(i)] $T_\tau(\widetilde{\mu}_1)=T_\tau(\widetilde{\mu}_2)=1$, 
			$T_\tau(\widetilde{\mu}_3)=T_\tau(\widetilde{\mu}_4)=(-1)^m$, and
			\item[(ii)]  $T_\tau(\mu_h)=(-1)^h$ for $1\le h\le m-1$.
		\end{enumerate}
	\end{theorem}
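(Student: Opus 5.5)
The plan is to reduce the normal case to part (A) of the argument in Theorem~\ref{pst_dn_1}, using Lemma~\ref{conjugacy} to remove all the $F_h^{(j)}$ terms from the spectral decomposition. Since $S$ is normal and $n$ is even, $S_2\in\{\emptyset,\langle a\rangle,\langle a^2\rangle,a\langle a^2\rangle\}$. Lemma~\ref{conjugacy} then gives $\eta_h(S_2)=0$ for every $1\le h\le m-1$. Consequently the last sum in \eqref{sd_tp_dn} is empty, and
\[
T_\tau(P)=\sum_{i=1}^4 T_\tau(\widetilde{\mu}_i)E_i+\sum_{h=1}^{m-1}T_\tau(\mu_h)\sum_{j=1}^4E_h^{(j)} .
\]

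First I rule out transfer between the two cosets. By \eqref{esp_2}, every $E_h^{(j)}$ is block diagonal with respect to $G_0\cup G_1$, so for $(u,v)\in G_0\times G_1$ only the one-dimensional part contributes. From \eqref{esp_1},
\[
T_\tau(P)_{u,v}=\frac{1}{2n}\Bigl(T_\tau(\widetilde{\mu}_1)-T_\tau(\widetilde{\mu}_2)+(-1)^{u+v}\bigl(T_\tau(\widetilde{\mu}_3)-T_\tau(\widetilde{\mu}_4)\bigr)\Bigr).
\]
By Corollary~\ref{minimum} each $|T_\tau(\widetilde{\mu}_i)|\le 1$, so this entry has modulus at most $4/(2n)=2/n<1$, since $n\ge 4$. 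This is exactly where the hypothesis $n\ge 4$ is used. Hence PST is impossible between $G_0$ and $G_1$, and by symmetry also between $G_1$ and $G_0$. As a consistency check, Lemma~\ref{pst_cayley} predicts the same: $T_\tau(P)=\mathcal{R}_z$ with $z$ a central involution, and for $n\ge4$ even the only such element is $a^m$, which preserves each coset.

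Next, for $(u,v)\in(G_0\times G_0)\cup(G_1\times G_1)$, the entry is \eqref{pst_dn_eq} with the third and fourth lines absent. I then run the argument of part (A) of Theorem~\ref{pst_dn_1} verbatim.

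\textbf{Forward direction.} The diagonal-block weights sum to $\frac{1}{2n}\cdot 4+\frac{1}{n}\cdot 2(m-1)=1$. So $T_\tau(P)_{u,v}=1$ forces equality in the triangle inequality, that is, all terms equal $T_\tau(\widetilde{\mu}_1)=1$, since $\widetilde{\mu}_1=1$. This gives three things:
\begin{itemize}
\item $T_\tau(\widetilde{\mu}_2)=1$;
\item $T_\tau(\widetilde{\mu}_3)=T_\tau(\widetilde{\mu}_4)=(-1)^{u+v}$;
\item $\omega_n^{(u-v)h}T_\tau(\mu_h)=\omega_n^{(v-u)h}T_\tau(\mu_h)=1$.
\end{itemize}
The last one, taken at $h=1$ (available since $m\ge2$), forces $\omega_n^{2(u-v)}=1$ with $u\ne v$, so $u-v=\pm m$. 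Then $(-1)^{u+v}=(-1)^m$, and $T_\tau(\mu_h)=\omega_n^{\pm mh}=(-1)^h$.

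\textbf{Converse.} If (i) and (ii) hold with $u-v=\pm m$, then every summand equals its maximal positive weight, and summing gives $T_\tau(P)_{u,v}=1$. Lemma~\ref{defpst} then yields PST.

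I expect no serious obstacle. The only delicate points are the following:
\begin{itemize}
\item \emph{Off-diagonal block.} The bound there must be strictly less than $1$; this is why $n\ge4$ is needed. For $n=2$ the group is abelian and the bound fails.
\item \emph{Weight bookkeeping.} I should check that the weights in the diagonal block sum to exactly $1$, so that the equality case of the triangle inequality is valid.
\end{itemize}
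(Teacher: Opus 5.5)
Your proof is correct. For pairs in the same coset it is exactly the paper's argument: normality gives $\eta_h(S_2)=0$ for all $h$, and then the triangle-inequality argument of part (A) of Theorem~\ref{pst_dn_1} goes through.

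Where you differ is in excluding transfer between $G_0$ and $G_1$.
\begin{itemize}
\item The paper invokes Lemma~\ref{pst_cayley}. For a normal Cayley graph this forces $T_\tau(P)=\mathcal{R}_z$ with $z$ a central involution. Since $Z(D_n)=\{1,a^m\}$ for even $n\ge 4$, and $\mathcal{R}_{a^m}$ preserves both cosets, every cross entry vanishes. Your ``consistency check'' is in fact the paper's entire argument for this step.
\item You instead read the cross block directly off the spectral decomposition. When every $\eta_h(S_2)=0$, the idempotents $E_h^{(j)}$ are block diagonal, so only $E_1,\dots,E_4$ contribute, and the entry has modulus at most $2/n<1$.
\end{itemize}

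Your route stays entirely inside the spectral framework. It makes the role of $n\ge 4$ explicit as the inequality $2/n<1$. It also gives a bound $|T_\tau(P)_{u,v}|\le 2/n$ valid at every time $\tau$, since the eigenvalues of $P$ lie in $[-1,1]$, not only under the PST assumption.

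The paper's route is more structural. It identifies $T_\tau(P)$ globally as right multiplication by $a^m$, so the partner $v=ua^m$ is known before any eigenvalue computation. It explains the hypothesis $n\ge 4$ through the center of $D_n$. It is also the same mechanism that rules out PST for odd $n$ in Theorem~\ref{pst_dn_4}.
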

	\begin{proof}
		Suppose that $\cay(D_n,S)$ exhibits PST between the vertices $u$ and $v$ at time $\tau$.  By Lemma~\ref{pst_cayley} and the fact that $Z(D_n)=\{1,a^m\}$, we find $T_\tau(P)=\mathcal{R}_{a^m}$.  Note that if $(u,v)\in G_0\times G_1$, then $u$ corresponds to $a^r$ and $v$ corresponds to $ba^s$ for some $r$ and $s$. Therefore the $uv$-th entry of $R_{a^m}$ is $0$, that is, $\evt T_\tau(P)\eu=0$. Similarly, if $(u,v)\in G_1\times G_0$, then also $\evt T_\tau(P)\eu=0$. Thus PST does not occur if $(u,v)\in (G_0\times G_1)\cup(G_1\times G_0)$. Hence $(u,v)\in (G_0\times G_0)\cup(G_1\times G_1)$. Since the graph is normal, we have $\eta_h(S_2)=0$ for each $h$. The result now follows by arguments analogous to those in the proof of Case~1 in Theorem~\ref{pst_dn_1}.
	\end{proof}
	We now assume that $n$ is odd.  Then $D_n$ has only two one-dimensional representations. 
	Accordingly, the eigenvalues of the discriminant $P$ of $\cay(D_n,S)$ corresponding to these representations are
	\[
	\widetilde{\mu}_i=\frac{\widetilde{\lambda}_i}{d}, \quad i\in\{1,2\},
	\]
	with corresponding eigenvectors as in Theorem~\ref{ev1_dn}. For the two-dimensional representations, the eigenvalues and eigenvectors of $P$ are the same as in the even case described in Theorem~\ref{ev2_dn}. The proof of the following result is similar to that of Theorem~\ref{pst_dn_1}.
	\begin{theorem}\label{pst_dn_3}
		Let $n=2m+1$ and $D_n$ be the dihedral group. Then the non-normal Cayley graph $\mathrm{Cay}(D_n,S)$ exhibits PST
		between the vertices $u$ and $v$ at time $\tau$ if and only if exactly one of the following holds.
		
		\begin{description}
			
			\item[(A)]
			$(u,v)\in (G_0\times G_0)\cup(G_1\times G_1)$ with $u-v=\pm m$, and the eigenvalues 
			of the discriminant matrix $P$ satisfy
			\begin{enumerate}
				\item[(i)] $T_\tau(\widetilde{\mu}_1)=T_\tau(\widetilde{\mu}_2)=1$,
				\item[(ii)]  $T_\tau(\mu_h)=(-1)^h$ for $1\le h\le m$ with  $\eta_h(S_2)=0$, and
				\item[(iii)] $T_\tau(\mu_h^{(1)})=T_\tau(\mu_h^{(2)})=(-1)^h$ for $1\le h\le m$ with   $\eta_h(S_2)\neq 0$.
			\end{enumerate}
			\item[(B)]
			$(u,v)\in (G_0\times G_1)\cup(G_1\times G_0)$ with  $\eta_h(S_2)\neq 0 $ for each $h$ and  $S_2 = a^{2|v-u|} S_2^{-1}$,
			and the eigenvalues of the discriminant matrix $P$ satisfy
			\begin{enumerate}
				\item[(i)]  $T_\tau(\widetilde{\mu}_1)=-T_\tau(\widetilde{\mu}_2)=1$, and
				\item[(ii)] $T_\tau(\mu_h^{(1)})=-T_\tau(\mu_h^{(2)})=\ell_h\omega_n^{|v-u|h}$ for $1\le h\le m$.
			\end{enumerate}
		\end{description}		
	\end{theorem}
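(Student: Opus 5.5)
The plan is to repeat the proof of Theorem~\ref{pst_dn_1} for $n=2m+1$, with two changes: only $\psi_1,\psi_2$ contribute one-dimensional terms, and the two-dimensional representations are $\rho_h$ for $1\le h\le m$. First I write the analogue of \eqref{spec_P} and \eqref{sd_tp_dn}:
\[
T_\tau(P)=\sum_{i=1}^{2}T_\tau(\widetilde\mu_i)E_i+\sum_{\substack{h=1\\ \eta_h(S_2)=0}}^{m}T_\tau(\mu_h)\sum_{j=1}^4E_h^{(j)}+\sum_{\substack{h=1\\ \eta_h(S_2)\neq0}}^{m}\Bigl(T_\tau(\mu_h^{(1)})(F_h^{(1)}+F_h^{(2)})+T_\tau(\mu_h^{(2)})(F_h^{(3)}+F_h^{(4)})\Bigr).
\]
Here $E_1,E_2$ are as in \eqref{esp_1}, and $E_h^{(j)},F_h^{(j)}$ are as in \eqref{esp_2}--\eqref{esp_3}. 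This expansion is valid because the listed eigenvectors form an orthonormal basis of $\Cl^{2n}$ for odd $n$. Completeness of the spectrum follows from the count $2+4m=2n$.

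\textbf{Same-coset pairs.} For $(u,v)\in(G_0\times G_0)\cup(G_1\times G_1)$ I read off $T_\tau(P)_{u,v}$ as in \eqref{pst_dn_eq}, dropping the $\psi_3,\psi_4$ terms. The coefficients are nonnegative weights $\frac{1}{2n}$ and $\frac1n$, whose moduli sum to $1$, multiplied by unimodular numbers. By Corollary~\ref{minimum} each $T_\tau(\mu)\in\{\pm1\}$. The triangle inequality then shows that $T_\tau(P)_{u,v}=1$ exactly when every summand equals $1$. Since $\widetilde\mu_1=1$, this gives $T_\tau(\widetilde\mu_2)=1$, together with $\omega_n^{\pm(u-v)h}T_\tau(\mu_h)=1$ (and likewise for $\mu_h^{(1)},\mu_h^{(2)}$) for $1\le h\le m$.

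From these equalities I extract $\omega_n^{(u-v)h}=\omega_n^{(v-u)h}\in\{\pm1\}$, and then the sign pattern $(-1)^h$ appearing in (A). This root-of-unity step is where I expect the main obstacle, because the parity of $n$ now matters. For odd $n$, $\omega_n^{2(u-v)h}=1$ forces $n\mid 2(u-v)h$ and hence $n\mid(u-v)$. So I must check carefully which pairs $u-v$ are actually admissible, and reconcile this with the stated condition $u-v=\pm m$ and the sign $(-1)^h$. I will track this carefully rather than copy the even case; I suspect the analysis shows that alternative (A) can only be met degenerately. The converse direction is immediate: substituting the conditions makes every summand $1$.

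\textbf{Cross-coset pairs.} For $(u,v)\in G_0\times G_1$ I use the analogue of \eqref{pst_dn_eq4} without the $\psi_3,\psi_4$ terms. If some $\eta_h(S_2)=0$, the corresponding weight is missing from the off-diagonal block, so $|T_\tau(P)_{u,v}|<1$. Hence $\eta_h(S_2)\neq0$ for all $1\le h\le m$. Equality in the triangle inequality then yields the chain \eqref{pst_dn_eq5}, truncated to $\psi_1,\psi_2$. This gives (B)(i) directly, and gives (B)(ii) via $\bar\ell_h\omega_n^{(u-v)h}=\ell_h\omega_n^{(v-u)h}$.

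The latter equality forces $\eta_h(S_2^{-1})=\omega_n^{2(u-v)h}\eta_h(S_2)$ for $1\le h\le m$. Conjugation extends this to $h=m+1,\dots,n-1$, and it holds trivially at $h=0$. The Fourier inversion argument \eqref{ranjan} then gives $S_2^*=2(v-u)-S_2^*$, that is, $S_2=a^{2|v-u|}S_2^{-1}$. For the converse I follow the even case verbatim: the symmetry of $S_2$ makes $\eta_h(S_2)=\omega_n^{h(v-u)}\kappa_h$ with $\kappa_h$ real, so $\ell_h=\pm\omega_n^{h(u-v)}$, and every summand becomes $1$. The case $G_1\times G_0$ follows by symmetry of $T_\tau(P)$.

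Finally, "exactly one" holds because (A)(i) requires $T_\tau(\widetilde\mu_2)=1$ while (B)(i) requires $T_\tau(\widetilde\mu_2)=-1$.
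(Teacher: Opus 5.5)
Your treatment of cross-coset pairs, alternative (B), is the paper's argument and is fine. You are in fact slightly more careful than the paper, since you extend $\eta_h(S_2^{-1})=\omega_n^{2(u-v)h}\eta_h(S_2)$ to all $h\in\Zl_n$ before Fourier inversion.

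\textbf{The gap is alternative (A).} Your sentence ``the converse direction is immediate: substituting the conditions makes every summand $1$'' is false for odd $n$. With $n=2m+1$ and $u-v=\pm m$ one has $\omega_n^{\pm mh}(-1)^h=e^{\mp\iu\pi h/n}\neq 1$. So under (A)(i)--(iii) the $\rho_h$-summand is $\frac{2}{n}\cos\frac{\pi h}{n}$ rather than $\frac{2}{n}$, and
\[
T_\tau(P)_{u,v}=\frac{1}{n}\Bigl(1+2\sum_{h=1}^{m}\cos\frac{\pi h}{n}\Bigr)=\frac{1}{n\sin\frac{\pi}{2n}}<1 .
\]
Hence (A) does not yield PST by substitution. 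The paper's one-line ``similar to Theorem~\ref{pst_dn_1}'' hides exactly this problem, so copying the even case cannot work here.

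In the forward direction, your own observation already settles the matter once you finish it. $n\mid 2(u-v)$ with $n$ odd forces $u=v$, so same-coset PST never occurs. Equivalently, by the argument of Lemma~\ref{pst_cayley}, $T_\tau(P)$ is the permutation $x\mapsto zx$ with $z=vu^{-1}$ of order $2$. For odd $n$ such a $z$ must be a reflection, so $u$ and $v$ lie in different cosets of $\langle a\rangle$. The statement is therefore correct only because (A) is vacuous. Proving its ``if'' half requires showing that (A)(i)--(iii) can never hold simultaneously, and your proposal does not address this.

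\textbf{The missing step needs a genuinely new idea.}
\begin{itemize}
\item For $n\ge 5$, use the Galois automorphism $\omega_n\mapsto\omega_n^{2}$, extended to the algebraic closure.
\begin{itemize}
\item It maps $\eta_1(C)$ to $\eta_2(C)$, so it carries the roots of $x^2-2\eta_1(S_1)x+\eta_1(S_1)^2-\eta_1(S_2)\eta_1(S_2^{-1})$ to the roots of the corresponding $h=2$ polynomial. After dividing by $d$, it sends $\rho_1$-eigenvalues of $P$ to $\rho_2$-eigenvalues.
\item It fixes both the integer polynomial $T_\tau$ and the value $-1$.
\item Hence $T_\tau$ cannot be $-1$ on every $\rho_1$-eigenvalue and $+1$ on every $\rho_2$-eigenvalue.
\end{itemize}
\item Alternatively for $n\ge5$, note that $T_\tau(P)=T_\tau(A/d)$ has rational entries. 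Under (A), its entries with $u-v=\pm1$ would equal $\pm 1/(n\cos(\pi/n))$, which is irrational by Niven's theorem.
\item For $n=3$, check the four non-normal types $S_1\in\{\emptyset,\{a,a^2\}\}$, $|S_2|\in\{1,2\}$ directly. Each fails:
\begin{itemize}
\item $T_\tau(1)=1\neq-1$;
\item $T_\tau(-1)=1$ forces $\tau$ even, while $T_\tau(1/2)=-1$ forces $\tau$ odd;
\item $T_\tau(1/3)\neq 1$;
\item $T_\tau(0)$ would have to equal both $1$ and $-1$.
\end{itemize}
\end{itemize}
With this in place the theorem reduces to ``PST if and only if (B)'', and the rest of your argument goes through.
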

	
	\begin{theorem}\label{pst_dn_4}
		Let $n$ be odd, and $D_n$ be the dihedral group. Then the normal Cayley graph $\mathrm{Cay}(D_n,S)$ does not exhibit PST.
	\end{theorem}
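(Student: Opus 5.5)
The plan is to use Lemma~\ref{pst_cayley}. Suppose, for contradiction, that the normal Cayley graph $\cay(D_n,S)$ with $n$ odd exhibits PST at some time $\tau$. Since $S$ is a union of conjugacy classes, the second part of Lemma~\ref{pst_cayley} gives $T_\tau(P)=\mathcal{R}_z$ for some $z\in Z(D_n)$ of order $2$.

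The key step is to compute the center of $D_n$ for odd $n$. A rotation $a^k$ commutes with $b$ only if $a^{k}=a^{-k}$, that is, $a^{2k}=1$. Since $n$ is odd, this forces $k\equiv 0 \pmod n$. A reflection $ba^k$ commutes with $a$ only if $ba^{k+1}=aba^k=ba^{k-1}$, that is, $a^2=1$, which is impossible for $n\ge 3$. Hence $Z(D_n)=\{1\}$ for odd $n\geq 3$, so there is no central element of order $2$. This contradicts the conclusion of Lemma~\ref{pst_cayley}, and therefore PST cannot occur.

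The only subtlety is a degenerate case. For $n=1$ the group $D_1$ has order $2$ and is abelian, so it has a central involution. That case is presumably excluded by the standing assumption $n\ge 2$ in the definition of $D_n$. I would state explicitly that $n\ge 3$ is used.

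As a cross-check, one can also argue directly from the spectral data. Normality gives $\eta_h(S_2)=0$ for every $h$ by Lemma~\ref{conjugacy}. Then the diagonal blocks of $T_\tau(P)$ on $G_0$ and $G_1$ depend only on $u-v$, and the triangle-inequality argument from case (A) of Theorem~\ref{pst_dn_3} requires $\omega_n^{2(u-v)h}=1$ for all $h$. This forces $u=v$, since $n$ is odd. The off-diagonal blocks of $T_\tau(P)$ are combinations of $E_1$ and $E_2$ only, with entries of absolute value at most $1/n<1$. So no off-diagonal entry of $T_\tau(P)$ can equal $1$, again ruling out PST.

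I expect no real obstacle. The center computation is the crux, and Lemma~\ref{pst_cayley} does the rest.
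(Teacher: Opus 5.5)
Your main argument is the paper's proof. Lemma~\ref{pst_cayley} gives $T_\tau(P)=\mathcal{R}_z$ with $z\in Z(D_n)$ of order $2$, and $Z(D_n)=\{1\}$ for odd $n\ge 3$; you verify this center computation explicitly, where the paper only asserts it, and you correctly note that $n=1$ is excluded by the standing assumption $n\ge 2$. Your spectral cross-check is also sound and amounts to an independent proof: normality makes every $E_h^{(j)}$ block-diagonal, so entries of $T_\tau(P)$ between $G_0$ and $G_1$ come only from $E_1,E_2$ and have modulus at most $1/n$, while for entries within a block the equality case of the triangle inequality forces $n\mid 2(u-v)$, hence $u=v$ for odd $n$.
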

	\begin{proof}
		Suppose, if possible, the normal Cayley graph $\cay(D_n,S)$ exhibits PST at time $\tau$. Then by Theorem~\ref{pst_cayley}, we  have $T_\tau(P)=\mathcal{R}_z$ for some $z\in Z(D_n)$ of order $2$. However, $Z(D_n)=\{1\}$ whenever $n$ is odd. Thus the result follows.
	\end{proof}
	Recall that the vertex set of $\cay(D_n,S)$ is decomposed into two parts  $G_0$ and $G_1$. In the next corollary, we prove that if $\cay(D_n,S)$ exhibits PST, then the set of all pairs of vertices of $\cay(D_n,S)$ exhibiting PST is a subset of $(G_0\times G_0)\cup(G_1\times G_1)$ or $(G_0\times G_1)\cup(G_1\times G_0)$.
	\begin{corollary}
		Let $\cay(D_n,S)$ be a Cayley graph over $D_n$ exhibiting PST. Then the set of all pairs of vertices of $\cay(D_n,S)$ exhibiting PST is a subset of $(G_0\times G_0)\cup(G_1\times G_1)$ or $(G_0\times G_1)\cup(G_1\times G_0)$.
	\end{corollary}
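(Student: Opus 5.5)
The plan is to show something stronger: \emph{all} PST times of $\cay(D_n,S)$ produce one and the same permutation matrix. The dichotomy then follows from a single eigenvector argument. Throughout I write $T_k$ for $T_k(P)$. I use two facts from the paper: $\|T_k\eu\|\le 1$ for all $k$ and $u$ (Kubota--Segawa), and Lemma~\ref{pst_cayley}, which says that at a PST time $\tau$ the matrix $T_\tau$ is a fixed-point-free permutation matrix of order $2$.

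\textbf{Step 1: products of permutation-valued $T$'s.} Let $K=\{k\ge 0: T_k \text{ is a permutation matrix}\}$; note $0\in K$. I will use the Chebyshev product identity
\[
2T_jT_k=T_{j+k}+T_{|j-k|}.
\]
If $j,k\in K$, then every column of $T_jT_k$ is a standard basis vector $\mathbf{e}_w$. The corresponding columns of $T_{j+k}$ and $T_{|j-k|}$ are real vectors of norm at most $1$ whose average is $\mathbf{e}_w$. Equality in the triangle inequality forces both columns to equal $\mathbf{e}_w$. Hence
\[
T_{j+k}=T_{|j-k|}=T_jT_k,
\]
and in particular $j+k$ and $|j-k|$ both lie in $K$.

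\textbf{Step 2: $K$ is generated by one involution.} By Step 1, $K$ is closed under sums and absolute differences, so the Euclidean algorithm gives $K=g\mathbb{Z}_{\ge 0}$ with $g=\gcd(K\setminus\{0\})$; this is nonempty as soon as some PST time exists. Put $\sigma=T_g$. Step 1 gives $T_{kg}=\sigma^k$ for all $k\ge 0$, so in particular $T_{2g}=\sigma^2$. On the other hand the Chebyshev doubling formula gives $T_{2g}=2\sigma^2-I$. Comparing the two expressions yields $\sigma^2=I$. Therefore $T_k\in\{I,\sigma\}$ for every $k\in K$.

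Every PST time $\tau$ lies in $K$ by Lemma~\ref{pst_cayley}, and $T_\tau\neq I$ because $T_\tau$ has no fixed points. Hence $T_\tau=\sigma$ for every PST time $\tau$. By Lemma~\ref{defpst}, the set of PST pairs is exactly $\{(u,\sigma(u))\}$, independent of the time.

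\textbf{Step 3: $\sigma$ either preserves both parts or swaps them.} The vector $v_2=\frac{1}{\sqrt{2n}}(\mathbf{1}_n,-\mathbf{1}_n)^t$ is an eigenvector of $P$ (Theorem~\ref{ev1_dn}), so $\sigma v_2=T_g(\widetilde{\mu}_2)v_2$. Since $\sigma$ is a permutation matrix, this eigenvalue is $\pm1$.
\begin{itemize}
\item If it is $+1$, then $\sigma$ maps $G_0$ into $G_0$ and $G_1$ into $G_1$, so every PST pair lies in $(G_0\times G_0)\cup(G_1\times G_1)$.
\item If it is $-1$, then $\sigma$ interchanges $G_0$ and $G_1$, so every PST pair lies in $(G_0\times G_1)\cup(G_1\times G_0)$.
\end{itemize}
This argument does not depend on the parity of $n$ or on normality, since $v_2$ is an eigenvector in all cases.

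\textbf{Main obstacle.} The delicate point is Step 1: the equality case in the triangle inequality must be applied column by column, using the norm bound for \emph{every} vertex. This step is what turns the per-time description of PST into a statement about all times simultaneously.
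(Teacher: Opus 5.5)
Your proof is correct, and it takes a genuinely different route from the paper's.

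\textbf{The paper's route.} The corollary is derived case by case from the characterization theorems:
\begin{itemize}
\item for normal $S$, it defers to the normal-case results (Theorems~\ref{pst_dn_2} and~\ref{pst_dn_4});
\item for non-normal $S$, it takes a same-part PST pair and a cross PST pair with minimal times $\tau$ and $\sigma$. Corollary~\ref{minimum} makes the period equal both $2\tau$ and $2\sigma$, so $\tau=\sigma$. Conditions \textbf{(A)}(i) and \textbf{(B)}(i) of Theorem~\ref{pst_dn_1} (or~\ref{pst_dn_3}) then give the contradiction $1=T_\tau(\widetilde{\mu}_2)=-1$.
\end{itemize}

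\textbf{Your route.} Your Steps~1--2 replace this periodicity-based synchronization of PST times with the Chebyshev product identity plus the Kubota--Segawa norm bound. They show that the permutation-valued times form $g\mathbb{Z}_{\ge 0}$ with $T_{kg}(P)=\sigma^k$ and $\sigma^2=I$. Hence every PST time yields the same involution. The decisive sign is the same in both proofs: $T_\tau(\widetilde{\mu}_2)=\pm1$ decides whether $G_0$ and $G_1$ are preserved or swapped. You read it off the eigenvector $v_2$ directly, rather than from the entrywise computations behind Theorems~\ref{pst_dn_1}--\ref{pst_dn_4}.

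\textbf{What your approach buys.}
\begin{itemize}
\item It is uniform in the parity of $n$ and in normality.
\item It needs neither Theorem~\ref{ev_grover} nor the detailed PST characterizations.
\item It covers $n=2$. There $D_2$ is abelian, every connection set is normal, and Theorem~\ref{pst_dn_2} (stated for $n\ge 4$) does not apply; indeed $S=\{b\}$ gives PST across the two parts.
\item It proves more than asked: the PST pairs are exactly the $2$-cycles of a single fixed-point-free involution, independent of time.
\end{itemize}

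\textbf{What the paper's approach buys.} Given the machinery already built, it is only a few lines. It also ties the dichotomy to the explicit eigenvalue conditions \textbf{(A)} and \textbf{(B)}.
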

	\begin{proof}
		By Theorem~\ref{pst_dn_3}, the proof is clear if  $\cay(D_n,S)$ is normal.  Now suppose that $\cay(D_n,S)$ is non-normal. Let $(u,v)\in (G_0\times G_0)\cup(G_1\times G_1)$ such that $\cay(D_n,S)$  exhibits PST between $u$ and $v$ at the minimum time $\tau$. Similarly, suppose $(x,y)\in (G_0\times G_1)\cup(G_1\times G_0)$  such that $\cay(D_n,S)$  exhibits PST between $x$ and $y$ at the minimum time $\sigma$. Then by Corollary~\ref{minimum}, the graph is $2\tau$-periodic as well as $2\sigma$-periodic, giving that $\tau=\sigma$.
		
		Now if $n$ is even, then using Conditions \textbf{(A)}(i) and  \textbf{(B)}(i) from Theorem~\ref{pst_dn_1}, we get  $1=T_\tau(\widetilde{\mu}_2)=-1$, which is a contradiction. Similarly, we get a contradiction from Theorem~\ref{pst_dn_3} if $n$ is odd. Thus the result holds.
	\end{proof}
	\section{Illustrative Examples}
	The following examples illustrate the application of the main results of this paper and provide infinite families of graphs exhibiting PST on $\cay(D_n,S)$. We compute the relevant eigenvalues of the discriminant $P$ of $\cay(D_n,S)$ using    \eqref{ev_p_dn1}, \eqref{ev_p_dn2}, and Theorems~\ref{ev1_dn} and~\ref{ev2_dn}.
	\begin{example}\label{eg-pst1}
		Let $n=2m$ and $D_n$ be the dihedral group. Let $S=\{ ba,ba^{m-1},ba^{m+1},ba^{n-1}\}$. Then the following are true.
		\begin{enumerate}
			\item[(i)] If $m$ is odd, then $\cay(D_n,S)$ exhibits PST at the minimum time $2m$.
			\item[(ii)] If $m\equiv0\pmod 4$, then $\cay(D_n,S)$ does not exhibit PST.
			\item[(iii)] If $m\equiv2\pmod 4$, then $\cay(D_n,S)$ exhibits PST at the minimum time $m$.
		\end{enumerate}
	\end{example}
	\begin{proof}
		For $1\leq h\leq m-1$, we find that
		\[
		\eta_h(S_2)=\begin{cases}
			0 & \text{ if $h$ is odd}\\
			4\cos\frac{h}{m}\pi & \text{ if $h$ is even.}
		\end{cases}
		\]
		The eigenvalues of $P$ are given by
		\begin{align*}
			\widetilde{\mu}_1 &= 1, \quad \widetilde{\mu}_2 = -1,\\
			\widetilde{\mu}_3 &= 
			\begin{cases}
				0, & \text{if $m$ is odd}\\
				-1, & \text{if $m$ is even},
			\end{cases}\\
			\displaybreak
			\widetilde{\mu}_4 &=
			\begin{cases}
				0, & \text{if $m$ is odd}\\
				1, & \text{if $m$ is even},
			\end{cases}\\
			\mu_h &= 0 \quad \text{for } 1 \leq h \leq m-1 \text{ with } \eta_h(S_2)=0,\\
			\mu_h^{(1)} &= \cos\frac{h}{m}\pi \quad \text{for } 1 \leq h \leq m-1 \text{ with } \eta_h(S_2)\neq 0,\\
			\mu_h^{(2)} &= -\cos\frac{h}{m}\pi \quad \text{for } 1 \leq h \leq m-1 \text{ with } \eta_h(S_2)\neq 0.
		\end{align*}
		By Theorem~\ref{ev_grover} and Lemma~\ref{periodic}, the period $\sigma$ of $\cay(D_n,S)$ is given by
		\[
		\sigma=\begin{cases}
			4m & \text{ if $m$ odd}\\
			m & \text{ if } m\equiv 0 \pmod{4}\\
			2m & \text{ if } m\equiv 2 \pmod{4}.
		\end{cases}
		\]
		Hence by Corollary~\ref{minimum}, if $\cay(D_n,S)$ exhibits PST, then the minimum time $\tau$ for exhibiting PST is given by
		\[
		\tau=\begin{cases}
			2m & \text{ if $m$ odd}\\
			m/2 & \text{ if } m\equiv 0 \pmod{4}\\
			m & \text{ if } m\equiv 2 \pmod{4}.
		\end{cases}
		\]
		
		\noindent \textbf{Case 1.} $m$ is odd. Let $(u,v)\in (G_0\times G_0)\cup(G_1\times G_1)$ with $u-v=\pm m$. In this case, the eigenvalues of $P$ are given by
		\begin{align*}
			\widetilde{\mu}_1 &= 1, ~ \widetilde{\mu}_2 = -1, ~\widetilde{\mu}_3= 0= \widetilde{\mu}_4,\\
			\mu_h &= 0 \quad \text{for } 1 \leq h \leq m-1 \text{ with $h$ odd},\\
			\mu_h^{(1)} &= \cos\frac{h}{m}\pi \quad \text{for } 1 \leq h \leq m-1 \text{ with $h$ even}, \text{ and}\\
			\mu_h^{(2)} &= -\cos\frac{h}{m}\pi \quad \text{for } 1 \leq h \leq m-1 \text{ with $h$ even}.
		\end{align*}
		Then all the conditions in case \textbf{(A)} of Theorem~\ref{pst_dn_1} are satisfied at time $2m$. Hence PST occurs at the minimum time $2m$.

		\noindent \textbf{Case 2.} $m \equiv 0 \pmod{4}$.  
		Assume PST occurs, and so the minimum time is $m/2$. Write $m=4k$ for some $k$. If $k$ is odd, take $h=m/2$. Then $\eta_h(S_2)=0$, and $T_{m/2}(\mu_h)=T_{m/2}(0)=-1 \neq (-1)^h$,	a contradiction.   If $k$ is even, take $h=2$. Then $\eta_h(S_2)\neq 0$, and $T_{m/2}(\mu_h^{(1)})=-1 \neq (-1)^h$,	again a contradiction.  Thus PST does not occur in this case.

		\noindent \textbf{Case 3.} $m\equiv2\pmod 4$.	Let $(u,v)\in (G_0\times G_0)\cup(G_1\times G_1)$ with $u-v=\pm m$.	In this case, the eigenvalues of $P$ are given by
		\begin{align*}
			\widetilde{\mu}_1 &= 1, ~ \widetilde{\mu}_2 = -1, ~\widetilde{\mu}_3=-1,~ \widetilde{\mu}_4=1,\\
			\mu_h &= 0 \quad \text{for } 1 \leq h \leq m-1 \text{ with $h$ odd},\\
			\mu_h^{(1)} &= \cos\frac{h}{m}\pi \quad \text{for } 1 \leq h \leq m-1 \text{ with $h$ even}, \text{ and}\\
			\mu_h^{(2)} &= -\cos\frac{h}{m}\pi \quad \text{for } 1 \leq h \leq m-1 \text{ with $h$ even}.
		\end{align*}
		Then all the conditions in case \textbf{(A)} of Theorem~\ref{pst_dn_1} are satisfied at time $m$. Hence PST occurs at the minimum time $m$.
	\end{proof}
	\begin{example}\label{eg-pst11}
		Let $n=2m$ and $D_n$ be the dihedral group. Let $S=\{ a,a^{m-1},a^{m+1},a^{n-1}\}$. Then the following are true.
		\begin{enumerate}
			\item[(i)] If $m$ is odd, then $\cay(D_n,S)$ exhibits PST at the minimum time $2m$.
			\item[(ii)] If $m\equiv0\pmod 4$, then $\cay(D_n,S)$ does not exhibits PST.
			\item[(iii)] If $m\equiv2\pmod 4$, then $\cay(D_n,S)$ exhibits PST at the minimum time $m$.
		\end{enumerate}
	\end{example}
	\begin{proof}
		The eigenvalues of $P$ are given by
		\begin{align*}
			\widetilde{\mu}_1 &= 1= \widetilde{\mu}_2,\\
			\widetilde{\mu}_3 &= \widetilde{\mu}_4=
			\begin{cases}
				0 & \text{if $m$ is odd}\\
				-1 & \text{if $m$ is even},
			\end{cases} \text{and}\\
			\mu_h &= \begin{cases}
				0 & \text{if $h$ is odd}\\
				\cos \frac{h}{m}\pi & \text{if $h$ is even}.
			\end{cases}
		\end{align*}
		The result follows from Theorem~\ref{pst_dn_3}.
	\end{proof}
	\begin{example}\label{eg-pst2}
		Let $n=2m$ and $D_n$ be the dihedral group. Then the following holds.
		\begin{enumerate}
			\item[(i)] If $S=b\langle a\rangle$, then $\cay(D_n,S)$ exhibits PST if and only if $n=2$.
			
			\item[(ii)] If $S=b\langle a^2\rangle$ or $S=ba\langle a^2\rangle$, then $\cay(D_n,S)$ exhibits PST if and only if $n\in\{2,4\}$.
		\end{enumerate}
	\end{example}
	\begin{proof}
		\begin{enumerate}
			\item[(i)] The eigenvalues of $P$ are given by 
			\[\widetilde{\mu}_1=1,\ \widetilde{\mu}_2=-1,\ \widetilde{\mu}_3=\widetilde{\mu}_4=0\text{ and }\mu_h=0\text{ for }1\leq h\leq m-1.\]
			Suppose $n\geq 6$, that is, $m\geq 3$. Then there exists a positive integer $h$ such that $1\leq h<h+1\leq m-1$. Accordingly, $\mu_h=0=\mu_{h+1}$, giving that $T_k(\mu_h)=T_k(\mu_{h+1})$ for each $k$. Then by Condition (ii) of Theorem~\ref{pst_dn_2}, $\cay(D_n,S)$ does not exhibit PST.
			
			For $n=4$, the eigenvalues of $P$ are $\widetilde{\mu}_1=1,\ \widetilde{\mu}_2=-1,\ \widetilde{\mu}_3=\widetilde{\mu}_4=0,\  \mu_1=0$. Suppose PST occurs at time $\tau$. Then by Theorem~\ref{pst_dn_2}, we have $T_\tau(\mu_1)=(-1)^1=-1$ and $T_\tau(\widetilde{\mu}_3)=(-1)^2=1$, that is, $T_\tau(\mu_1)\neq T_\tau(\widetilde{\mu}_3)$. However, $\mu_1=\widetilde{\mu}_3$ forces that $T_\tau(\mu_1)=T_\tau(\widetilde{\mu}_3)$, a contradiction. Thus PST does not occur for  $n=4$.
			
			For $n=2$, the graph is a cycle on 4 vertices, and a direct calculation shows that it exhibits PST at time  $2$ (see \cite{bhakta1}).
			
			\item[(ii)]  If $S=b\langle a^2\rangle$, then the eigenvalues of $P$ of $\cay(D_n,S)$ are given by 
			\[\widetilde{\mu}_1=1,\ \widetilde{\mu}_2=-1,\ \widetilde{\mu}_3=1,\ \widetilde{\mu}_4=-1\text{ and }\mu_h=0\text{ for }1\leq h\leq m-1.\]
			If $S=ba\langle a^2\rangle$, then the eigenvalues of $P$ of $\cay(D_n,S)$ are given by 
			\[\widetilde{\mu}_1=1,\ \widetilde{\mu}_2=-1,\ \widetilde{\mu}_3=-1,\ \widetilde{\mu}_4=1\text{ and }\mu_h=0\text{ for }1\leq h\leq m-1.\]
			
			First consider the case $S=b\langle a^2\rangle$.   For $n\geq4$, by Theorem~\ref{pst_dn_2}, the graph exhibits PST if and only if $n=4$ as shown in the previous part. For $n=2$, the graph is the disjoint union of two copies of $K_2$, which exhibits PST at time $1$. The case $S=ba\langle a^2\rangle$ follows similarly.\qedhere
		\end{enumerate}
	\end{proof}
	Note that the graphs in Theorem~\ref{eg-pst11} are disconnected, while Theorem~\ref{eg-pst2} yields only a very limited number of Cayley graphs exhibiting PST. So far, we have not been able to find any infinite family of connected normal Cayley graphs over a dihedral group that exhibit PST.
	\begin{example}\label{eg-pst3}
		Let $D_n$ be the dihedral group with $n\geq 3$ and let $S=\{b,ba\}$. Then $\cay(D_n,S)$ exhibits PST at the minimum time $n$.
	\end{example}
	\begin{proof}
		Assume that $n$ is odd, say $n=2m+1$ for some $m\geq 1$. Then $\eta_h(S_2)=1+\omega_n^h\neq 0$ for  $1\leq h\leq m$. The eigenvalues of $P$ are given by
		\begin{align*}
			&\widetilde{\mu}_1=1,~
			\widetilde{\mu}_2=-1,\\
			&\mu_h^{(1)}=\cos\frac{\pi h}{n}\text{ for } 1\leq h\leq m, \text{ and}\\
			&\mu_h^{(2)}=-\cos\frac{\pi h}{n}\text{ for } 1\leq h\leq m.
		\end{align*}
		By Theorem~\ref{ev_grover} and Lemma~\ref{periodic}, the period of $\cay(D_n,S)$ is $2n$. Hence by Lemma~\ref{minimum}, if $\cay(D_n,S)$ exhibits PST, then the minimum time is $n$. Let $u\in G_0$ and $v\in G_1$ be such that $2(v-u)=n+1$. Then $\ell_h \omega_n^{(v-u)h}=(-1)^h$.  Moreover, one verifies that 
		\begin{align*}
			&S_2= a^{2(v-u)}S_2^{-1},\\
			&T_n(\widetilde{\mu}_1)=-T_n(\widetilde{\mu}_2)=1, \text{ and }\\
			&T_n(\mu_h^{(1)})=-T_n(\mu_h^{(2)})=\ell_h \omega_n^{(v-u)h}\text{ for } 1\leq h\leq m.
		\end{align*} 
		Hence Condition (B) of Theorem~\ref{pst_dn_4} holds, and therefore $\cay(D_n,S)$ exhibits PST  between $u$ and $v$ at time $n$.
		
		Now assume that $n$ is even, say $n=2m$ for some $m\geq 2$. Then $\eta_h(S_2)=1+\omega_n^h\neq 0$ for  $1\leq h\leq m-1$. The eigenvalues of $P$ are given by
		\begin{align*}
			&\widetilde{\mu}_1=1,~
			\widetilde{\mu}_2=-1,~\widetilde{\mu}_3=0=\widetilde{\mu}_4,\\
			&\mu_h^{(1)}=\cos\frac{\pi h}{n}\text{ for } 1\leq h\leq m-1, \text{ and}\\
			&\mu_h^{(2)}=-\cos\frac{\pi h}{n}\text{ for } 1\leq h\leq m-1.
		\end{align*}
		In this case as well, the period of $\cay(D_n,S)$ is $2n$. Consequently, if $\cay(D_n,S)$ exhibits PST, then the minimum time is $n$. Let $(u,v)\in (G_0\times G_0)\cup(G_1\times G_1)$ such that $u-v=\pm m$.  Then by Theorem~\ref{pst_dn_1}, one can prove that PST exhibits between $u$ and $v$ at time $n$.
	\end{proof}

	\section{Conclusion}
	In this paper, we characterize the existence of PST on Cayley graphs over the dihedral group. The obtained characterizations are summarized in Figure~\ref{dihedral_classification}. 
	\begin{figure}[h!]
		\centering
		\begin{tikzpicture}[
			every node/.style={
				draw,
				rounded corners,
				align=center,
				text width=3cm,
				minimum height=1cm,
				font=\small
			},
			arrow/.style={->, thick}
			]

			\node (root) at (0,0) {$\boldsymbol{\mathrm{Cay}(D_n,S)}$};

			\node (even) at (-4,-2.2) {\textbf{$\boldsymbol{n}$ even}};
			\node (odd)  at ( 4,-2.2) {\textbf{$\boldsymbol{n}$ odd}};

			\node (evenN)  at (-6,-4.5) {\textbf{Non-normal} \\ \emph{PST may occur}\\   Theorem~\ref{pst_dn_1}\\ Examples~\ref{eg-pst1}, \ref{eg-pst3}};
			\node (evenNN) at (-2,-4.5) {\textbf{Normal} \\ \emph{PST may occur}\\   Theorem~\ref{pst_dn_2}\\ Examples~\ref{eg-pst11}, \ref{eg-pst2}};

			\node (oddN)  at (2,-4.5) {\textbf{Non-normal} \\ \emph{PST may occur}\\   Theorem~\ref{pst_dn_3}\\ Example~\ref{eg-pst3}};
			\node (oddNN) at (6,-4.2) {\textbf{Normal} \\ \emph{No PST}\\  Theorem~\ref{pst_dn_4}};

			\draw  (root) -- (even);
			\draw  (root) -- (odd);
			
			\draw (even) -- (evenN);
			\draw  (even) -- (evenNN);
			
			\draw  (odd) -- (oddN);
			\draw  (odd) -- (oddNN);
		\end{tikzpicture}
		\caption{Classification of PST on $\mathrm{Cay}(D_n,S)$}
		\label{dihedral_classification}
	\end{figure}
	
	Several natural directions for future research arise from this work. One immediate extension is to investigate PST on Cayley graphs over other families of non-abelian groups and to explore whether similar classification results can be obtained. 
	
	It is well known that PST is a relatively rare phenomenon.	For this reason, even when PST does not occur,  a natural question is whether $U^\tau \Phi_1$ can come {arbitrarily close} to $\Phi_2$ for some time $\tau$. In such cases, we say pretty good state transfer (PGST) occurs. The concept of PGST was first proposed by Godsil~\cite{state}. The study of PGST in Grover walks on abelian Cayley graphs has been initiated in \cite{bhakta5}. Extending this work in Grover walks on $\cay(D_n,S)$, as well as on other non-abelian Cayley graphs, presents a promising direction for future research.
	\section*{Acknowledgments}
	The author Koushik Bhakta gratefully acknowledges financial support from the Prime Minister’s Research Fellowship (PMRF), Government of India (PMRF-ID: 1903298). Xiwang Cao's work is supported by the NNSF of China, No. 12571575.

\end{document}